\newcommand{\expect}[1]{\mathbb{E}\bigg[#1\bigg]}
\newcommand{\expectm}[1]{\mathbb{E}\big[#1\big]}
\newcommand{\argmax}[1]{\underset{#1}{\mbox{arg max }}}
\newcommand{\bv}[1]{{\boldsymbol{#1} }}
\newcommand{\script}[1]{{{\cal{#1} }}}
\newcommand{\tsf}[1]{\textsf{#1}}
\begin{document}
%{-.3in}
\title{Optimal Power Procurement and Demand Response with Quality-of-Usage Guarantees}

\abovedisplayskip=.05in \belowdisplayskip=.05in
\author{\large{Longbo Huang, Jean Walrand, Kannan Ramchandran}%
%\thanks{Longbo Huang (web: www.eecs.berkeley.edu/$\sim$huang), Jean Walrand (web: www.eecs.berkeley.edu/$\sim$wlr) and Kannan Ramchandran (web: www.eecs.berkeley.edu/$\sim$kannanr/) are with the EECS dept at UC Berkeley, Berkeley, CA, 94720, USA.}
\thanks{The authors are with the EECS dept at UC Berkeley, Berkeley, CA, 94720, USA. Emails: \{huang, wlr, kannanr\}@eecs.berkeley.edu. }
%\thanks{This material is supported in part by === }
} % end of author

%\markboth{Draft}{Huang}
\date{}
\maketitle

\newtheorem{remark}{Remark}
\newtheorem{fact_def}{\textbf{Fact}}
\newtheorem{coro}{\textbf{Corollary}}
\newtheorem{lemma}{\textbf{Lemma}}
\newtheorem{main}{\textbf{Proposition}}
\newtheorem{theorem}{\textbf{Theorem}}
\newtheorem{claim}{\emph{Claim}}
\newtheorem{prop}{Proposition}
\newtheorem{assumption}{\textbf{Assumption}}
\newtheorem{condition}{\textbf{Condition}}

\begin{abstract}
In this paper, we propose a general operating scheme which allows the utility company to jointly perform power procurement and demand response so as to maximize the social welfare. Our model takes into consideration the effect of the renewable energy and the multi-stage feature of the power procurement process. It also enables the utility company to provide quality-of-usage (QoU)  guarantee to the power consumers, which ensures that the average power usage level meets the target value for each user. 
To maximize the social welfare, we develop a low-complexity algorithm called the \emph{welfare maximization algorithm} (\tsf{WMA}), which performs joint power procurement and dynamic pricing. \tsf{WMA} is constructed based on a two-timescale Lyapunov optimization technique. We prove that \tsf{WMA} achieves a close-to-optimal utility and ensures that the QoU requirement is met with bounded deficit. 
\tsf{WMA} can be implemented in a distributed manner and is robust with respect to system dynamics uncertainty. 
%readily applicable in practice. 
\end{abstract}

\vspace{-.1in}
\section{Introduction}
Recent years have witnessed enormous efforts to modernize the power grid. %Among the numerous important aspects, how to optimally integrate renewable energy and how to effectively implement the demand-response schemes are two central 
Central to this challenging task are the integration of renewable energy technologies \cite{der-energy-gov02} and the design of efficient user demand-response schemes \cite{dr-report}. 
In this paper, we propose a general operating scheme which allows the utility companies to optimally integrate the available renewable energy sources to maximize the social welfare, and to perform optimal demand response to ensure   the users' quality-of-usage (QoU).
%in the multi-stage electricity market. 
% 

Specifically, we consider a utility company, equipped with a renewable energy source, purchasing power from a two-stage electricity market to meet the users' demand. 
Every day, the utility company first purchases power from the day-ahead market for the next day, called the base-power, based on its forecast of the user demand and the available renewable energy. 
Then, if the base-power and the renewable power are enough to meet the user demand on the next day, no further action is needed; otherwise, the utility company purchases additional power from the real-time market to meet the power deficit, which incurs an extra cost. 
Besides performing power procurement, the utility company also adjusts its electricity selling prices to encourage the users to shift their deferrable loads to times when the prices in the market are low. At the same time, the utility company  also has to ensure the users' quality-of-usage (QoU), i.e., \emph{the average user power consumption level exceeds a pre-agreed level.} 
%and that the users' loads can be fulfilled with bounded delay. 
%
%In addition to trying to 
%
The objective of the utility company is to optimally perform power procurement and determine the electricity prices, so as to maximize the social welfare, i.e., aggregate user utility minus the cost for supporting the user loads, subject to  guaranteeing the users' QoU. 
%Each user 

There have been many previous works on integrating renewable energy into the power grid, designing optimal power procurement algorithms, and constructing demand response schemes. 
\cite{pravin-risk11} proposes the concept of ``risk-limiting-dispatching'' as a way of incorporating renewable energy into the current power grid. 
Works \cite{eilyan-sellingwind} and \cite{eilyan-tps11} consider the problem of selling wind power in the current electricity market. 
 \cite{cai-contract-cdc-11} designs contract schemes to best  commercialize renewable energy for the electricity market. 
On the power procurement side, \cite{libin-cdc-11} and \cite{libin-allerton-11}  consider the problem of optimally procuring power to maximize the social welfare for a single day under both independent and time-correlated demand conditions,  using stochastic subgradient methods. 
\cite{poor-power-11} considers the problem of minimizing the expected power cost and formulates the problem as a Markov decision problem. 
%Optimal solutions are found by using stochastic subgradient methods. 
\cite{neely-grid-10} considers the problem of optimally utilizing the renewable energy for both cost minimization and revenue maximization. \cite{he-multi-time-wind-11} formulates the optimal power dispatch problem as a Markov decision problem. 
On the demand response side, \cite{nali-dr-11} and \cite{lijun-dr-book-11} formulate the problem of optimal demand response as convex programs and study the role of dynamic pricing. \cite{rad-home-11} uses a game theoretic approach to study the demand management problem. 
However, we note that most of the previous works (a) either consider the three problems  separately, in which case the resulting algorithm may not be optimal for the entire system, or (b) do not consider the multi-stage nature of the power procurement process, which makes the solution less applicable to the    power grid, 
or (c) only consider optimizing the system for a single period, in which case finding the optimal solution relies heavily on the convexity assumptions of the user utility functions. 

%of the previous works typically only focus on one component of the  

%However, though 

In this paper, we consider the problem of jointly optimizing power procurement and demand response to maximize the long term average social welfare. %Different from previous works, 
Our model fully accounts for the multi-stage nature of the electricity market and allows the user utility functions to be \emph{arbitrary} increasing functions. 
%nonconvex. 
%
Our algorithm design approach uses a two-stage Lyapunov optimization technique, which allows us to develop low-complexity optimal control algorithms. 
The main contributions of the paper are summarized as follows:
\begin{itemize}
\item A general operating scheme for utility companies to jointly 
optimize power procurement and demand response to maximize the long term system welfares and ensure the users'  quality-of-usage (QoU).  
\item The optimal power procurement scheme for utility companies to maximize the gain of  renewable energy sources. Explicitly quantified  the value of renewable energy to the system, and the effects of its mean and variance on the gain. 
\item Characterization of the social welfare loss due to the constraint that all users must see the same price. 
\item A low-complexity online control algorithm welfare maximization algorithm (\tsf{WMA}),  which fully accounts for the multi-stage nature of the electricity market, and is  provably near-optimal for maximizing the social welfare while ensuring that the users' QoU are met with \emph{bounded} deficit for all times. 
%Moreover, we show that \tsf{WMA} guarantees a bounded delay for the user loads. 
% 
%We then developed the approximate version of \tsf{WMA}, called \tsf{A-WMA} which achieves similar performance and is purely greedy. 
\item An evaluation of our scheme using real-world renewable energy and market power prices  data, and a demonstration that it effectively maximizes the social welfare. 
\end{itemize}

This paper is organized as follows. In Section \ref{section:model}, we describe our system model. In Section \ref{section:alg}, we present our algorithm design approach and construct the welfare maximization algorithm (\tsf{WMA})  to jointly optimize power procurement and pricing. 
We then analyze our scheme in Section \ref{section:analysis}. Numerical results are presented in Section \ref{section:sim}. We conclude our paper in Section \ref{section:conclusion}. 

%\emph{partially} 

%The utility company also 
%The scheme consists of two parts, optimal power procurement and dynamic pricing. The first component 

%that allows the utility companies to dynamically determine power procurement and pricing to maximize the social welfare and ensure the users' quality-of-usage (QoU). 

%\section{Notations}
%We first list the set of notations used throughout the paper. 
%\begin{itemize}
%\item $\stackrel{d}{=}$: equal in distribution
%\item $\mathbb{R}^n$: $n$-dimensional Euclidean space 
%%\item p.d.f.:  the probability density function 
%%\item c.d.f.:  the cumulative density function 
%%\item $F^{-1}_Z(y)$: the \emph{inverse} cumulative density function (c.d.f.) of a random variable $Z$, i.e., 
%
%%\item i.i.d.: independent and identically distributed
%\end{itemize}

\section{System model}\label{section:model}
We consider a system shown in Fig. \ref{fig:model}. In the system, 
the utility company is supplying power to $N$ users. %\footnote{Note that this user can be viewed as the ``aggregated user'' of the many users served by the utility company.} 
Time is divided into days, and each day is further divided into $T$ equal size slots that correspond, e.g.,  to one-hour l or $15$-minute intervals. 

%serves the users, using the renewable energy it has and the power it procures  from the electricity market. 
%

\begin{figure}[cht]
\vspace{-.1in}
\centering
\includegraphics[height=2.2in, width=3.5in] {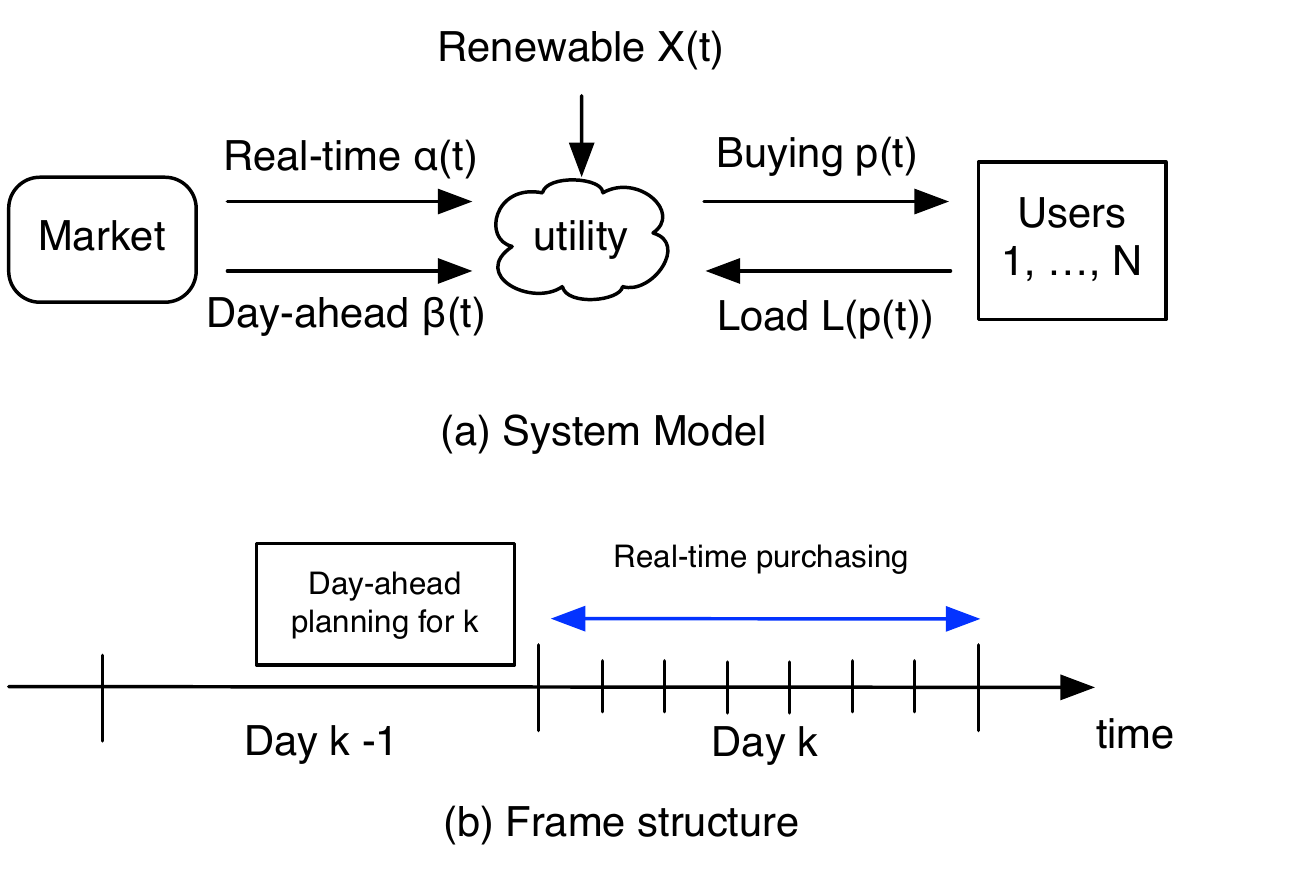}
\vspace{-.2in}
\caption{\small{(a) The system model. (b) The time structure. }}
\label{fig:model}
\vspace{-.2in}
\end{figure}

\subsection{Power procurement model}
We assume that the utility company has a renewable energy source and denote the renewable power it obtains in slot $t$ on day $k$ by $X(k, t)$. This renewable energy source can be solar panels or wind turbines owned by the utility company. We assume that $X(k, t)\stackrel{d}{=}X(t)$ for some random variable $X(t)$ and are i.i.d. every day. \footnote{This assumption can be viewed as ignoring the potential time dependence of $X(k, t)$ on $k$, e.g., the seasonal behavior change of wind power. However, our results can be extended to incorporate such dependence on $k$. } Here ``$\stackrel{d}{=}$''  represents ``equal in distribution.'' 
We denote the probability density function (p.d.f.) of $X(t)$ by $f_X(x, t)$, and  compactly write the renewable energy as $X(t)$ in the following. 
We denote the maximum capacity of the renewable energy by $x_{\tsf{max}}$, so that $X(t)\leq x_{\tsf{max}}$ with probability one. 

To serve the users, the utility company operates according to the following procedure. \footnote{In practice, there will be ancillary services in addition to the two stages considered here. Our model can be extended to incorporate these components. }
%purchases power from the electricity market as follows: 
%\begin{itemize}

\textbf{Demand response:} On day $k-1$, the utility company first chooses the  power price vector  $\bv{p}(k)=(p(k, t), t=0, 1, ..., T-1)$ for day $k$ from some feasible price set $\script{P}$, and announces the prices. 
We assume that the feasible price set $\script{P}$ is a time-invariant and compact subset of $\mathbb{R}^{T}$, which includes the constraint $0\leq p(k, t)\leq p_{\tsf{max}}$ for all $k, t$. For instance, one example is $\script{P}=[0, p_{\max}]^T$. %Another example can be $\script{P}$ contains all the prices 
 %Another example can be $$
%includes a price cap $p_{\tsf{max}}$ and a lowest price $0$.  %The price cap for any $p(k, t)$ is denoted by $p_{\tsf{max}}$. 
% 

 \textbf{Day-ahead planning:} % 
On day $k-1$, the utility company first forecasts the aggregate user demand $L(k, t)$ based on its knowledge of the users' responses to the prices. We assume that the utility company can accurately  forecast $L(k, t)$, including its mean and the distribution of the randomness. \footnote{Note that this is not very restrictive and can be done when the user utility functions are known. Even in the case when they are not known, the utility company can still predict with empirical data. }

%some price-demand curve $L(k, t)=\sum_{n=1}^NL_n(k, t)$, where $L_n(k, t)=L_n(p(k, t), t)$ is the power demand from user $n$. 
%
To efficiently utilize the renewable energy, the utility company also forecasts the available renewable energy $X(t)$ on day $k$ based on $f_X(x, t)$. Then, it procures a base-power $B(k, t)$ from the day-ahead market based on the estimated load $L(k, t)$ and renewable energy $X(t)$, at a price  $\beta(k, t)$. 

\textbf{Real-time purchasing:} At time $t$ on day $k$, if the procured power and the renewable energy  are enough to meet the demand, i.e., $B(k, t)+X(t)\geq L(k, t)$,  no further action is needed. % \footnote{We ignore the potential penalty tha} 
Otherwise, the utility company has to purchase additional power from the real-time market with a real-time price $\alpha(k, t)$ to meet the power deficit $Y(k, t)\triangleq L(k, t) - B(k, t)- X(t)$. 
%
%However, since the 

%\end{itemize}

%

%
%We assume that the utility model 
In the above steps, we assume that when purchasing power from the electricity market on day $k-1$, the utility company can observe the day-ahead price $\beta(k, t)$ and the expected real-time $\bar{\alpha}(k, t)$. 
In practice, the day-ahead and real-time prices are time-varying. To model this aspect, we denote $\bv{\beta}(k)=(\beta(k, 1), ..., \beta(k, T))$ and $\bar{\bv{\alpha}}(k) =(\expectm{\alpha(k, 1)}, ..., \expectm{\alpha(k, T)})$, the market price vector pair.  
%
%However, it only has knowledge of $\bar{\alpha}(k, t)$. 
%We assume that on day $k-1$, the utility compa
%
We then first assume that on each day, the pair $(\bv{\beta}(k), \bar{\bv{\alpha}}(k))$ is drawn i.i.d. from some finite market price set $\script{P}^{\tsf{m}}=\{(\bv{\beta}_j, \bar{\bv{\alpha}}_j), j=1, ..., M\}$ according to some distribution which may not be known to the utility company.  
We will later extend our results to the case when the vector evolves according to some finite state Markovian process. 
%{Note that our results including the algorithms only require the expected value of the real-time prices. }
%, which may not be known to the 
Finally, we assume that both the day-ahead and real-time prices are upper bounded, i.e., $0\leq \alpha(k, t)\leq \alpha_{\tsf{max}}$ and  $0\leq\beta(k, t)\leq \beta_{\tsf{max}}$, and denote $\delta_{\tsf{max}} \triangleq \max[\alpha_{\tsf{max}}, \beta_{\tsf{max}}]$. 
%Below, we also make the simplifying assumption that the demand functions $$

\subsection{User demand model}

We assume that each user derives a utility $U_n(k, t)=U_n(L, t)$ at time $t$ on day $k$ by consuming a load of $L_n^{\tsf{min}}(t)\leq L\leq L_n^{\tsf{max}}$, where $L_n^{\tsf{min}}(t)$ is the minimum amount of load that user $n$ must consume at time $t$, e.g., lighting at night. \footnote{Our results readily extend to the case when $U(\cdot, \cdot)$ is also a function of $k$, in which case each day has  different utility functions for each time. } 
Each utility function $U_n(L,  t)$ is only assumed to be an increasing and continuous function of $L$. % and  has a maximum first derivative $\delta_n(t)$. We denote $\delta_n\triangleq\max_t\delta_n(t)$ and assume that  $\delta_n\leq p_{\tsf{max}}$ for all $n$. 

Upon receiving the prices for day $k$, each user first plans an \emph{intended} power consumption level $L^{\tsf{d}}_n(k, t)$ for each time $t$ (the choice of $L^{\tsf{d}}_n(k, t)$ will be specified later). 
However,  the \emph{actual} power consumption on day $k$ may not be exactly equal to $L^{\tsf{d}}_n(k, t)$. We model this effect by assuming that the actual consumption is equal to $L_n(k, t)=L^{\tsf{d}}_n(k, t)+w_n(k, t)$ for some i.i.d. zero mean random variable $w_n(k, t)$. We assume that $w_n(k, t)\stackrel{d}{=} w_n(t)$ for some random variable $w_n(t)\leq w_{n}^{\tsf{max}}$ for all time. We denote the p.d.f. of $w_n(t)$ by $f_{w_n}(t)$ and  assume that $w_n(t) + L^{\tsf{d}}_n(k, t)\geq L_n^{\tsf{min}}(t)$ for all times, and that $w_n(t)$ is independent of the renewable energy $X(t)$. \footnote{Note that in practice, user $n$'s power consumption may well be correlated with the renewable energy level. This effect is captured by the utility function $U_n(L, t)$ in our formulation.}

%and use $w(k, t)=\sum_nw_n(k, t)$ to denote the total variance of the individual loads. 
% and that its distribution is known to the user and the utility company. 
%
% with variance $\sigma^2_n(t)$. 
%Note that when the actual 

% \textbf{Utility-from-load:} 
%Each user %Here 
Each user determines his intended consumption by solving the following maximization problem, where the expectation is taken over $w_n(k, t)$: 
% 
%Upon receiving the power price vector $\bv{p}(k)$, each user $n$ first determines its consumption profile $\bv{L}_n(k)=(L_n(k, 1), ..., L_n(k, T))$ for day $k$  by solving:
\begin{eqnarray}
\hspace{-.3in}&&L^{\tsf{d}}_n(k, t)= \min\argmax{L_n^{\tsf{min}}(t)\leq L\leq L_n^{\tsf{d, max}}  }  \expect{U_n(L+w_n(k, t), t) \label{eq:user-choice}\\
\hspace{-.3in}&&\qquad\qquad\qquad\qquad\qquad\qquad\quad  - p(k, t) (L+w_n(k, t))}.\nonumber
\end{eqnarray}
Here $L_n^{\tsf{d, max}} = L_n^{\tsf{max}} -w_n^{\tsf{max}}$ is the maximum planned consumption level. 
That is, the user picks his load to minimize the expected utility minus cost. In the case when there are multiple choices, he picks the load with minimum magnitude. Therefore, the price vector uniquely determines the loads of the users. 
We also notice that, since $U_n(L, t)$ is a function of a single variable $L$, the maximization problem of user $n$ can usually be efficiently solved with high accuracy, even if $U_n(L, t)$ is nonconcave. 

In practice,  besides maximizing his utility via power consumption every time slot, each user also has a requirement on the average power usage level. To model this aspect, we assume that each user $n$ has the following 
%the utility company has to ensure the following 
\emph{quality-of-usage} (QoU) requirement:  \footnote{In our problem, this condition can be shown to be equivalent to requiring that the constraint holds with probability $1$ when the limit is well defined.}
%constraint for all user $n$: 
 \begin{eqnarray}
\liminf_{K\rightarrow\infty}\frac{1}{KT}\sum_{k=0}^{K-1}\sum_{t=0}^{T-1}\expectm{L_n(k, t)} \geq L_n^{\tsf{av}}. \label{eq:min-def-load}
\end{eqnarray}
%Since the utility company is in fact performing dynamic load control, it is necessary for him to provide motivations to the users, by ensuring certain long term performance guarantees. To model this aspect, we assume that %\end{itemize}
% 
%This constraint can 
%
%
% 
Note that $L_n^{\tsf{av}}$ can also be viewed as measuring the willingness of user $n$ to perform load shifting:  a smaller $L_n^{\tsf{av}}$ represents less stringent QoU requirement, and the user is more willing to defer his load. 
%
%To ensure that users not only receive the minimum consumption, and to model the fact that the user planned load does not vary much, 
% 
The constraint (\ref{eq:min-def-load}) can be viewed as an approximate  version of the more stringent constraint $\sum_{t=0}^{T-1}L_n(k, t) \geq TL_n^{\tsf{av}}$ for each frame $k$, which inevitably requires dynamic-programming as the solution technique \cite{libin-cdc-11}. 
As we will see later,  (\ref{eq:min-def-load}) allows us to approximately solve the harder constraint with a much simpler online algorithm. 

Finally, to model the user heterogeneity, we assume that there exists a positive constant $\gamma\geq1$ such that: 
%we assume that the users are not too heterogeneous in that their utility functions satisfy the following condition: f
for any given price $p(k, t)$ and for any user pair $n$ and $m$, the \emph{intended} consumptions are bounded by each other: 
\begin{eqnarray}
L^{\tsf{d}}_n(k, t) \leq \gamma L^{\tsf{d}}_m(k, t), \,\, L^{\tsf{d}}_m(k, t)\leq \gamma L^{\tsf{d}}_n(k, t). \label{eq:hetero}
\end{eqnarray}
When $\gamma=1$, all the users are homogeneous; whereas if $\gamma=\infty$, the users can be highly heterogeneous. 
Now by defining $w(t)\triangleq\sum_nw_n(t)$ and $L^{\tsf{d}}(k, t)\triangleq \sum_n L^{\tsf{d}}_n(k, t)$ to be the random and deterministic components of the load, 
 the aggregate load can be expressed as:
\begin{eqnarray}
L(k, t) = L^{\tsf{d}}(k, t) + w(k, t). \label{eq:ag-load}
\end{eqnarray}
%the case when users are hi
%for some constant $\gamma>0$. 

%In the case when there are multiple optimums,
% 
% 
%In practice, due to randomness, user $n$'s actual demand may deviate from the profile. % 
%In the following, for the ease of presentation, 
%we will first consider the case when the actual demand of user $n$ at time $t$ is exactly $L_n(k, t)$. We then consider the case when the actual demand is a random function of $L_n(k, t)$ in Section \ref{subsection:random-demand}. 

%We denote $\delta\triangleq\max_{n, t}\delta_n(t)$. 

%We also assume that there exists $L^{\tsf{max}}_n$ such that $0\leq L_n(k, t)\leq L^{\tsf{max}}_n$. %Also, we assume 

%
%We note that in this case, given a price $p(k, t)$, there will be a unique optimal load value that solves (\ref{eq:user-choice}). We denote that as $L_n(p(k, t))$. We also 
%
%Not that we can also denote $L_n(k, t)$ as a function of $p(k, t)$, i.e., $$
%For our later presentation, it is convenient for us to denote 

% 
%Each user $n$ requires that his long term average power consumption exceeds a minimum requirement.  
%%
%This requirement can be viewed as to ensure that the user can eventually fulfill his demand. 
%
%It can also be viewed as the utility company's 
%
%serves as a commitment from the utility company 
%measure of the users' willingness to perform load-shifting. 

\vspace{-.1in}
\subsection{Utility company objective}
On day $k$, the cost for the utility company at time $t$ is given by:
\begin{eqnarray}
  \text{Cost}(k, t) \triangleq \beta(k, t) B(k, t) + \alpha(k, t) Y(k, t),  \label{eq:power-cost}
\end{eqnarray}
where $Y(k, t)=\big[  L(k, t) - B(k, t)-X(t) \big]^+$ denotes the power deficit at time $t$. 
%
% 
%
%The  $\min[\cdot]$ operator indicates that the utility company will always choose the best base-power values to minimize its cost. 
% 
%Now for any pricing and power procurement scheme, 
The welfare obtained by the utility company-user system on day $k$ is given by: 
\begin{eqnarray}
\text{Welfare}(k) \triangleq \sum_{t=0}^{T-1}\bigg[\sum_{n=1}^N U_{n}(L_n(k, t), t) - \text{Cost}(k, t)\bigg]. \label{eq:welfare-def}
\end{eqnarray}
Now for any feasibly power procurement and dynamic pricing policy $\Pi$, its  \emph{expected} average social welfare is defined: 
%The objective of the utility company is to maximize the social welfare $\text{Welfare}_{\tsf{av}}$, defined: 
\begin{eqnarray}
\text{Welfare}^{\Pi}_{\tsf{av}} \triangleq \liminf_{K\rightarrow\infty}\frac{1}{K}\sum_{k=0}^{K-1}\expect{\text{Welfare}^{\Pi}(k)}. \label{eq:avg-welfare}
\end{eqnarray}
Here the expectation is taken over the random renewable energy $X(t)$, the random market prices and $w_n(t)$ for each user. 
%subject to the minimum-average-load constraint (\ref{eq:min-def-load}). 
%where  $\text{Welfare}(k)$ is the expect welfare obtained in frame $k$ and is defined: 
% 
%We note that the 

%\color{red}**** The discussion of this user objective should probably occur earlier, instead of in this section on the utility company objective. *** \color{black}
%derive much simpler online algorithms, which can be used to approximately satisfies the harder constraint. 
%  

Below, we define $\text{Welfare}^*_{\tsf{av}}$ to be the optimal average welfare over all feasible policies that ensure the QoU constraint (\ref{eq:min-def-load}). 
The objective of the utility company is to find an optimal dynamic pricing and power procurement policy to maximize (\ref{eq:avg-welfare}) subject to (\ref{eq:min-def-load}). 
In the following, we refer to this problem as the \emph{Welfare Maximization Problem} (WMP). 
To ensure the existence of at least one optimal feasible control policy, we assume that: 
%and is assumed to satisfy:
\begin{eqnarray}
L_n^{\tsf{av}} > \max_{0\leq t\leq T-1} L_n^{\tsf{min}}(t), \quad L_n^{\tsf{d, max}} \geq L_{n}^{\tsf{av}} + w_n^{\tsf{max}},  \,\,\forall\,n. \label{eq:load-cond}
\end{eqnarray}
Note that the constraints in (\ref{eq:load-cond}) simply say that the QoU level of each user is no less than the minimum requirement and is no more than the maximum consumption. 
We also assume that both the renewable energy $X(t)$ and the load randomnesses $w_n(t)$ are continuous random variables.

%\subsection{Discussion}\label{subsection:discussion}
%%Note that our model is very general and can also incorp
%
%
%%
%
%The assumption that all the demands are deterministic can be viewed as absorbing the randomness of the demand into the random renewable energy. We also discuss how to deal with the randomness in Section ===. 
%
%Finally, we note that the welfare function includes the term $\text{Cost}(k)$, which is an implicit function of $L_n(p(k, t), t)$.  

%variables $L_n(p(k, t), t)$

\section{Algorithm design}\label{section:alg}
In this section, we construct algorithms to solve the WMP problem. Our solution is based on the Lyapunov network optimization technique \cite{neelynowbook} and requires minimum information of the random dynamics in the system. 
%
%
%The methodology behind the Lyapunov technique is to first only consider the problem of meeting the long term averages, e.g., \ref{eq:min-def-load}. Then, 
%
To carry out our construction, we first present in Section \ref{subsection:base-power} the optimal base-power procurement component of our algorithm, which maximizes the gain brought about to the utility company by the renewable energy. This result is not only interesting of its own, but also enables us to convert the 
%For the convenience of presentation, we first convert the function 
$\text{Welfare}(k)$ function into a function only of the user prices $p(k, t)$, which facilitates the design of the dynamic response algorithm in Section \ref{subsection:alg-design}.  
%in the next subsection. We then present the algorithm design procedure and the algorithm in 

%\textcolor{red}{Note that in our algorithm design, we first assume that the utility company has full knowledge of the user utility and demand. Later, we will consider the case when such knowledge is inaccurate. }

%The brief idea is that we first define a Lyapunov function and establish a Lyapunov drift inequality. Then, by properly combining the utility and the drift stability. 

\subsection{Optimal base-power procurement}\label{subsection:base-power}
In this section, we study how the utility company can maximize the gain of the renewable energy. 
%
%In the lemma, we define:
To do so, we first define: 
\begin{eqnarray}
Z(t) \triangleq  X(t) - w(t), \label{eq:load-minus-ran}
\end{eqnarray}
as the ``effective'' random renewable energy, and use $f_Z(z, t)$ and $F_Z(z, t)$ to denote the p.d.f. and  c.d.f. of $Z(t)$. 
We also use $F^{-1}_Z(a, t)$ to denote the \emph{inverse} c.d.f. function of $Z(t)$, i.e., 
\begin{eqnarray}
F^{-1}_Z(a, t)= \inf\{z: F_Z(z, t) = a\}. 
\end{eqnarray}
We now state the following lemma, which gives the optimal power procurement scheme for the utility company to minimize its \emph{expected} cost for meeting a load $L(k, t)$ at time $t$ on day $k$. 
\begin{lemma}\label{lemma:value-renewable}
%Suppose the utility company receives a random aggregate load $L(k, t)$ and renewable energy $X(k, t)$ for frame $k$ with a joi
The optimal base-power $B^*(k, t)$ is given by: 
\begin{equation} \label{eq:opt-B-def}
B^*(k, t) = \left\{\begin{array}{rl}
0,\qquad\qquad\qquad & \text{if } \frac{\bar{\alpha}(k, t)}{\beta(k, t)}<1\\
\bigg[L^{\tsf{d}}(k, t)-F_Z^{-1}(\frac{\beta(k, t)}{\bar{\alpha}(k, t)}, t)\bigg]^+, & \text{else}. 
\end{array}\right. 
\end{equation}
When $B^*(k, t)=0$, we have: 
\begin{eqnarray}
\expectm{\text{Cost}^*(k, t)} = \bar{\alpha}(k, t) \mathbb{E}\big[ (L^{\tsf{d}}(k, t) - Z(t))^+ \big]. \label{eq:cost-zero-B}
\end{eqnarray}
%the expected cost of the utility company at time $t$ is given by $\mathbb{E}\big[\alpha(k, t)\big] \mathbb{E}\big[ (L(k, t) - X(t))^+ \big]$. 
Else. if $B^*(k, t)>0$, we have: 
\begin{eqnarray}
&& \hspace{- 0.75in} \expectm{\text{Cost}^*(k, t)} =  \beta(k, t) L^{\tsf{d}}(k, t)  \nonumber  \\
&& \hspace{0.2in}  - \bar{\alpha}(k, t) \int_{-\infty}^{F_Z^{-1}(\frac{\beta(k, t)}{\bar{\alpha}(k, t)}, t)} zf_Z(z, t)dz.  \label{eq:value-renewable}
%\hspace{-.35in}&&\qquad=\beta(k, t) L(k, t) - \mathbb{E}\big[\alpha(k, t)\big]\int_{-\infty}^{z_{\tsf{th}}(t)}xf_{Z}(x, t)dx. \Diamond\nonumber
\end{eqnarray}
Moreover, $\expectm{\text{Cost}(k, t)}$ is non-decreasing in $L_n^{\tsf{d}}(k, t)$.  $\Diamond$
%\begin{eqnarray}
%\text{Cost}(k) = 
%\end{eqnarray}
\end{lemma}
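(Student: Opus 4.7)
The plan is to treat this as a standard newsvendor-style optimization. Fix $k$ and $t$, condition on $L^{\tsf{d}}(k,t)$ and $\beta(k,t),\bar{\alpha}(k,t)$, and write the expected cost as a function of the decision variable $B \geq 0$. Using $L(k,t) = L^{\tsf{d}}(k,t)+w(k,t)$ and $Z(t) = X(t)-w(t)$ together with the independence of $X(t)$ and $w(t)$,
\begin{equation*}
g(B) \triangleq \mathbb{E}\bigl[\text{Cost}(k,t) \,\big|\, B(k,t)=B\bigr] = \beta(k,t)\, B + \bar{\alpha}(k,t)\,\mathbb{E}\bigl[(L^{\tsf{d}}(k,t)-B-Z(t))^+\bigr].
\end{equation*}
The function $g$ is convex in $B$ because $(\cdot)^+$ is convex. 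Since $Z(t)$ has a density, $g$ is differentiable with
\begin{equation*}
g'(B) = \beta(k,t) - \bar{\alpha}(k,t)\, F_Z\bigl(L^{\tsf{d}}(k,t)-B,\,t\bigr).
\end{equation*}

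Next I would split into the cases that appear in the statement. First, if $\bar{\alpha}(k,t) < \beta(k,t)$, then $F_Z \le 1 < \beta/\bar{\alpha}$ gives $g'(B) > 0$ for all $B\ge 0$, so $B^*(k,t)=0$ is optimal. Otherwise $\beta(k,t)/\bar{\alpha}(k,t)\in [0,1]$, so setting $g'(B)=0$ yields $L^{\tsf{d}}(k,t)-B = F_Z^{-1}(\beta/\bar{\alpha},t)$, i.e.\ $B = L^{\tsf{d}}(k,t) - F_Z^{-1}(\beta/\bar{\alpha},t)$. If this quantity is non-positive, the convex function $g$ is non-decreasing on $[0,\infty)$ so $B^*=0$; otherwise this is the unique minimizer. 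Collecting the cases reproduces (\ref{eq:opt-B-def}).

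To obtain the cost expressions, when $B^*(k,t)=0$ I plug directly into $g(0)$ to get (\ref{eq:cost-zero-B}). When $B^*(k,t)>0$, I substitute $L^{\tsf{d}}-B^* = c \triangleq F_Z^{-1}(\beta/\bar{\alpha},t)$ and expand
\begin{equation*}
\mathbb{E}\bigl[(c - Z(t))^+\bigr] = c\,F_Z(c,t) - \int_{-\infty}^{c} z\, f_Z(z,t)\, dz = \frac{\beta(k,t)}{\bar{\alpha}(k,t)}\, c - \int_{-\infty}^{c} z\, f_Z(z,t)\, dz,
\end{equation*}
using $F_Z(c,t) = \beta/\bar{\alpha}$. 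Adding $\beta(k,t)\, B^* = \beta(k,t)(L^{\tsf{d}}(k,t)-c)$ causes the $\beta c$ terms to cancel, producing exactly (\ref{eq:value-renewable}).

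Finally, for monotonicity of $\mathbb{E}[\text{Cost}^*(k,t)]$ in $L^{\tsf{d}}_n(k,t)$: since $L^{\tsf{d}}(k,t) = \sum_n L^{\tsf{d}}_n(k,t)$, it suffices to show monotonicity in the aggregate $L^{\tsf{d}}(k,t)$. In the $B^*=0$ branch, $\mathbb{E}[(L^{\tsf{d}}-Z)^+]$ is clearly non-decreasing in $L^{\tsf{d}}$. In the $B^*>0$ branch, the formula reduces to $\beta(k,t)\,L^{\tsf{d}}(k,t)$ plus a term independent of $L^{\tsf{d}}$, hence strictly increasing. Continuity at the threshold $L^{\tsf{d}}(k,t) = F_Z^{-1}(\beta/\bar{\alpha},t)$ (where both formulas agree) rules out a downward jump between the two regimes, finishing the monotonicity claim. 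The main subtlety I expect is keeping the three regimes (cheap real-time market, threshold-based interior solution, and boundary $B^*=0$ due to the $[\cdot]^+$) consistent and verifying the cost formulas glue together continuously at the regime boundary.
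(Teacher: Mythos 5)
Your proposal is correct and follows essentially the same route as the paper's Appendix~A proof: write the expected cost as a convex newsvendor-type function of $B$, differentiate to get $\beta - \bar{\alpha}F_Z(L^{\tsf{d}}-B,t)$, solve for the threshold, and substitute back using $F_Z(F_Z^{-1}(\beta/\bar{\alpha},t),t)=\beta/\bar{\alpha}$ to cancel terms. Your treatment of the boundary cases and of the monotonicity claim (which the paper only asserts ``can be seen'') is a bit more explicit, but it is the same argument.
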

\begin{proof}
See Appendix A. 
\end{proof}
%To see the implication of (\ref{eq:value-renewable}), consider the case when the user load $L(k, t)$ is deterministic, i.e., $w(t)=0$, and that $L(k, t)\geq x_{\tsf{max}}$. 
%Then, if $\bar{\alpha}(k, t)\geq\beta(k, t)$,  using (\ref{eq:opt-B-def}), we have: 
%\begin{eqnarray}
%B^*(k, t) = L(k, t) - F_X^{-1}(\frac{\beta(k, t)}{\bar{\alpha}(k, t)}), 
%\end{eqnarray}
%Also, (\ref{eq:value-renewable}) becomes:
%\begin{eqnarray}
%\hspace{-.45in}&&\text{Cost}^*(k, t)  = \beta(k, t) L(k, t) \nonumber\\
%\hspace{-.45in}&&\qquad\qquad\qquad- \bar{\alpha}(k, t)\int_{-\infty}^{F_X^{-1}(\frac{\beta(k, t)}{\bar{\alpha}(k, t)})}xf_X(x, t) dx. \label{eq:value-of-renewable}
%\end{eqnarray}
Note that the second term on the right-hand-side (RHS) in (\ref{eq:value-renewable}) can indeed be interpreted as the \emph{value of the renewable energy} $X(t)$ (denoted as  \text{VoR}(t)) (without $X(t)$, the minimum expected cost is $\beta(k, t)L^d(k, t)$), i.e., 
\begin{eqnarray}
\text{VoR}(t) = \bar{\alpha}(k, t) \int_{-\infty}^{F_Z^{-1}(\frac{\beta(k, t)}{\bar{\alpha}(k, t)}, t)} zf_Z(z, t)dz. \label{eq:value-of-renewable}
\end{eqnarray}
% 
% 
%It is also interesting to see that the value  
%is independent of $L(k, t)$. This is because when $L(k, t)\geq x_{\tsf{max}}$, the renewable energy is ``fully absorbed'' into the load. Hence the value  is independent of the load. 
%
The following corollary follows from Lemma \ref{lemma:value-renewable} and shows that the value of the renewable energy is increasing with its mean and non-increasing with its variance. 
\begin{lemma}\label{lemma:value-of-renewable}
The value of renewable energy $\text{VoR}(t)$ is increasing with the mean of $X(t)$ and nondecreasing with the variance of $X(t)$. 
\end{lemma}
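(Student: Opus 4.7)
The plan is to work directly from the closed-form expression $\text{VoR}(t) = \bar{\alpha}(k,t)\,M(Z(t))$ with $M(Z) \triangleq \int_{-\infty}^{F_Z^{-1}(r)} z f_Z(z,t)\,dz$ and $r \triangleq \beta(k,t)/\bar{\alpha}(k,t) \in (0,1]$, i.e.\ the case $B^* > 0$ that is nontrivial (when $\bar{\alpha} \le \beta$ the utility company does not rely on the day-ahead market and a separate bookkeeping is straightforward). Because $Z(t) = X(t) - w(t)$ with $w(t)$ having a fixed distribution independent of $X(t)$, a perturbation of $\mathbb{E}[X(t)]$ or $\mathrm{Var}(X(t))$ translates cleanly (by independent convolution) into the corresponding perturbation of $Z(t)$, so both monotonicity claims reduce to tracking how $M(Z)$ responds to shift/scale perturbations of $Z(t)$.

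For the mean claim, the plan is a pure translation argument. Replace $X(t)$ by $X(t)+c$; then $Z(t)$ shifts by $+c$ so that $F_{Z+c}^{-1}(r) = F_Z^{-1}(r)+c$, and the change of variable $z \mapsto z-c$ inside the integral yields $M(Z+c) = M(Z) + c r$. Multiplying by $\bar{\alpha}(k,t)$, the net change in VoR is $\bar{\alpha}(k,t)\,c\,r = \beta(k,t)\,c$, which is strictly positive whenever $\beta(k,t) > 0$. This gives the strict monotonicity of $\text{VoR}(t)$ in the mean of $X(t)$.

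For the variance claim, the plan is a scale decomposition combined with a convex-order comparison. I parameterize $X(t) = \mu_X + \sigma_X U$ for a standardized $U$ with fixed shape, so $Z(t) = \mu_Z + \sigma_X U - w(t)$, and perform a change of variable inside $M$ that separates the $\mu_Z$-dependent contribution (which evaluates to $\mu_Z r$ and is independent of $\sigma_X$) from a shape contribution proportional to $\sigma_X$, namely $\sigma_X\,C(r)$ with $C(r) \triangleq \int_{-\infty}^{F_U^{-1}(r)} u f_U(u)\,du$. The monotonicity direction of $\text{VoR}$ in $\sigma_X^2$ is then read off from the sign of $C(r)$, which one analyzes via $\mathbb{E}[U]=0$, rewriting $C(r) = -\int_{F_U^{-1}(r)}^\infty u f_U(u)\,du$ and studying this right-tail integral as a function of the cutoff $F_U^{-1}(r)$. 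A complementary route, useful when only $\mathrm{Var}(X)$ is perturbed without changing shape, is the convex order: by Lemma \ref{lemma:value-renewable} the expected cost $\beta L^{\tsf{d}} - \text{VoR}$ can be written as the expectation of a convex function of $X$ (essentially $(L^{\tsf{d}}-B^*-X)^+$), so a mean-preserving spread comparison of two candidate variances of $X$ transfers directly into a comparison of the two VoRs in the direction stated by the lemma.

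The main technical obstacle lies in the sign bookkeeping for the variance step: $C(r)$ is a truncated first moment of a zero-mean distribution and its sign depends on where $F_U^{-1}(r)$ sits relative to zero, while the convolution with $-w(t)$ has to be treated so that only the $\sigma_X$-dependent part of $Z(t)$ drives the comparison. Once the tail integral $\int_{F_U^{-1}(r)}^\infty u f_U(u)\,du$ is pinned down (it vanishes at $\pm\infty$ and is unimodal in its cutoff) and the independence of $X(t)$ and $w(t)$ is used to preserve the perturbation type under convolution, the direction of monotonicity of $\text{VoR}(t)$ in $\mathrm{Var}(X(t))$ claimed by the lemma follows.
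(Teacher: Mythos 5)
Your main route for the nontrivial case is essentially the paper's own proof: put $Z$ in location--scale form, change variables in $\int_{-\infty}^{F_Z^{-1}(\beta/\bar{\alpha})} z f_Z(z)\,dz$ to split off $\beta\mu$ plus $\sigma\bar{\alpha}$ times the truncated first moment of the standardized variable, and conclude from the fact that $\int_{-\infty}^{c} h f_H(h)\,dh \le 0$ for \emph{every} cutoff $c$ when $\mathbb{E}[H]=0$ (your translation argument for the mean, $M(Z+c)=M(Z)+cr$, is exactly the $\beta\mu$ term). Three caveats. First, commit to a sign: your own computation gives $\text{VoR}=\beta\mu+\sigma\bar{\alpha}C(r)$ with $C(r)\le 0$, i.e.\ VoR is \emph{non-increasing} in $\sigma$; this agrees with the paper's proof and with the sentence introducing the lemma, but contradicts the word ``nondecreasing'' in the lemma statement itself (a typo in the paper), so deferring to ``the direction claimed by the lemma'' is a dodge that would land you on the wrong side. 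Second, you identify $B^*=0$ with $\bar{\alpha}\le\beta$ and wave it off, but $B^*=0$ also occurs when $L^{\tsf{d}}\le F_Z^{-1}(\beta/\bar{\alpha})$, in which regime the cost is (\ref{eq:cost-zero-B}) rather than (\ref{eq:value-renewable}); the paper spends half its proof differentiating $\bar{\alpha}\,\mathbb{E}[(L^{\tsf{d}}-Z)^+]$ in $\mu$ and $\sigma$ to cover this, and you need some version of that step. Third, your ``complementary'' convex-order route is actually the cleaner argument and is genuinely different from the paper's: for each fixed $B$ the map $z\mapsto \beta B+\bar{\alpha}(L^{\tsf{d}}-B-z)^+$ is convex, convex order is preserved under independent convolution with $-w$, so a mean-preserving spread of $X$ increases $\mathbb{E}[\text{Cost}]$ for every $B$ and hence increases the infimum over $B$; this handles both the $B^*>0$ and $B^*=0$ regimes at once and, unlike both your scale decomposition and the paper's, does not implicitly assume that perturbing $\mathrm{Var}(X)$ leaves the \emph{shape} of the convolution $Z=X-w$ fixed. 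I would make that the primary argument rather than a footnote.
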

\begin{proof}
See Appendix B. 
\end{proof}
%We also have the following corollary, which shows 

% 
%We have the following 
%The following corollary, which is of independent interest, shows that the value of the renewable energy is increasing in its mean value, and is decreasing in its variance. 

%Now define $\delta(k, t)=1$ if $B^*(k, t)>0$. 
%We denote $\text{Cost}(k)$ as $\text{Cost}()$ 
%We can thus express $\text{Cost}(k)$ using Lemma \ref{lemma:value-renewable} as follows: 
%\begin{eqnarray}
%\hspace{-.28in}&& \text{Cost}(k)  \label{eq:cost-rewrite}\\
%\hspace{-.28in}&& = \sum_{t=0}^{T-1} \bigg[(1- \delta(k, t))\mathbb{E}\big[\alpha(k, t)\big] \mathbb{E}\big[ (L(k, t) - X(t))^+ \big] \nonumber\\
%\hspace{-.28in}&&  + \delta(k, t) \big[  \beta(k, t) L(k, t) - \mathbb{E}\big[\alpha(k, t)\big]\int_{-\infty}^{x_{\tsf{th}}(k, t)}xf_{X}(x, k, t)dx \big]    \bigg]. \nonumber 
%%\hspace{-.25in}&& = \sum_{t=0}^{T-1} \bigg[  \big[(1- \delta(k, t))\mathbb{E}\big[\alpha(k, t)\big]+  \delta(k, t)\beta(k, t) \big]L(k, t) \nonumber \\
%%\hspace{-.25in}&&  \qquad\qquad\quad - \delta(k, t) \mathbb{E}\big[\alpha(k, t)\big]\int_{-\infty}^{x_{\tsf{th}}(k, t)}xf_{X}(x, k, t)dx   \bigg].  \nonumber 
%\end{eqnarray}

%When $B^*(k, t)=0$, it is easy to see that the expected cost of the utility company is given by $\mathbb{E}\big[\alpha(k, t)\big] L(t)$. When 

%Note that in practice, we typically have $\mathbb{E}\big[\alpha(k, t)\big]\geq\beta(k, t)$ and that $L(k, t)\geq x_{\tsf{max}}$, therefore we typically will have $B^*(k, t)>0$. In this case, we see that 

\subsection{The Lyapunov approach}\label{subsection:alg-design}
We now present the construction of the joint pricing-power procurement algorithm using the Lyapunov technique. The method works as follows: we first define a set of ``deficit'' queues for each of the constraints we want to guarantee, i.e., (\ref{eq:min-def-load}). Then, using the queueing dynamics (defined later), we obtain a drift inequality, which captures how the actions at every time affect the system welfare and the deficit queue sizes. 
After that, we construct our algorithm by finding our actions to minimize  RHS of the drift inequality, which corresponds to finding the ``descent'' direction for the system state. % 
By doing so at every time slot, we can guarantee that the average welfare is maximized. %One advantage of this approach is that,  
Also, although we start with the time average constraint (\ref{eq:min-def-load}), we can get as a by-product of our algorithm an explicit \emph{sample-path} deficit queueing bound. This provides explicit delay guarantee for the deferred loads of the users, and is very useful in practice. 
%
%In many case, we can also obtain, as a by-product, bounds on the deficit queues
%

%After that, our algorithm will be constructed by minimizing the  

To start, we define a \emph{load-deficit} queue $Q_n(\tau), \tau=0, 1, ...$ for each user $n$ as follows: $Q_n(0)=0$, and it evolves according to: 
%\color{red}*** I changed the next equation, please check.***\color{black}
\begin{eqnarray}
\hspace{-.02in}Q_n(t_k+t+1) = [ Q_n(t_k+t) - L_n(k, t)]^+ + L^{\tsf{av}}_n,\label{eq:queue-dynamic}
\end{eqnarray}
for all $k$ and all $t$. Here $t_k\triangleq kT$ is introduced for notation simplicity. 
The intuition behind the deficit queue is that, if we can guarantee $Q_n(t)/t\rightarrow0$,  then (\ref{eq:min-def-load}) will be ensured. 
To see this, note from (\ref{eq:queue-dynamic}) that:
\begin{eqnarray}
Q_n(\tau+1) \geq Q_n(\tau) - L_n(\tau) + L^{\tsf{av}}_n. 
\end{eqnarray}
Here $\tau=t_k+t$. Taking expectations on both sides and summing it up from $\tau=0, ... r-1$, we get: 
\begin{eqnarray}
\expectm{Q_n(r)} \geq \expectm{Q_n(0)} - \sum_{\tau=0}^{r-1}\expectm{L_n(\tau)} + rL^{\tsf{av}}_n. 
\end{eqnarray}
Dividing both sides by $r$, and taking the liminf as $r\rightarrow\infty$, we see that (\ref{eq:min-def-load}) follows. 
%Summing this over $k=0, 1, ..., K$ and $t=0, 1, ..., T-1$

Now to make use of the deficit queues, we define a Lyapunov function $V(\tau) \triangleq \frac{1}{2}\sum_{n=1}^NQ^2(\tau)$ and denote $\bv{Q}(\tau)=(Q_n(\tau), n=1, ..., N)$. We then define the following $T$-slot Lyapunov drift for every time $t_k=kT$: 
\begin{eqnarray}
\Delta(t_k)  = \expect{V(t_{k+1}) - V(t_k)\left.|\right. \bv{Q}(t_k)}. \label{eq:drift-def}
\end{eqnarray}
Here the expectation is taken over the randomness of the renewable energy and the potentially random power procurement and pricing action selections.

We now proceed to construct our algorithm. We first have the following lemma regarding the drift defined in (\ref{eq:drift-def}). 
\begin{lemma}\label{lemma:pure-drift}
For any value $k=0, 1, ...,$ we have:
\begin{eqnarray}
&&\hspace{-.4in} \Delta(t_k)\leq CT   \nonumber \\
&&\hspace{-.7in} \qquad - \sum_{t=0}^{T-1}\expect{\sum_{n}Q_{n}(t_k+t)\big[  L_n(k, t) - L^{\tsf{av}}_n\big]\left.|\right. \bv{Q}(t_k)}.\hspace{-.05in}  \label{eq:drift-bound} 
\end{eqnarray}
Here $C=\frac{1}{2}\sum_{n}\big([L_n^{\tsf{max}}]^2 + [L_n^{\tsf{av}}]^2\big)$.  $\Diamond$
\end{lemma}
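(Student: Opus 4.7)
The plan is to derive the bound by first establishing a per-slot quadratic drift inequality for each individual queue $Q_n$, then telescoping these over the $T$ slots in frame $k$, and finally taking the conditional expectation given $\bv{Q}(t_k)$. This is the standard Lyapunov drift machinery; the only subtlety is the nonstandard form of the queue update in \eqref{eq:queue-dynamic}, where the ``arrival'' $L_n^{\tsf{av}}$ is added \emph{after} the truncation $[\cdot]^+$.

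First I would fix a slot $\tau = t_k + t$ and establish the elementary inequality
\begin{eqnarray}
Q_n(\tau+1)^2 \leq Q_n(\tau)^2 + (L_n^{\tsf{av}})^2 + L_n(k,t)^2 - 2 Q_n(\tau)\bigl(L_n(k,t) - L_n^{\tsf{av}}\bigr). \nonumber
\end{eqnarray}
The clean way to verify this is to split on whether $Q_n(\tau) \geq L_n(k,t)$ or not. In the first case the truncation is inactive and $Q_n(\tau+1) = Q_n(\tau) - L_n(k,t) + L_n^{\tsf{av}}$, so direct expansion gives the bound with $(L_n^{\tsf{av}} - L_n(k,t))^2 \leq (L_n^{\tsf{av}})^2 + L_n(k,t)^2$. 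In the second case $Q_n(\tau+1) = L_n^{\tsf{av}}$, and a small algebraic check shows the RHS is still nonnegative, because the extra slack equals $(Q_n(\tau) - L_n(k,t))^2 + 2 Q_n(\tau) L_n^{\tsf{av}} \geq 0$.

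Next I would divide by two, sum over $n = 1, \ldots, N$, and use $L_n(k,t) \leq L_n^{\tsf{max}}$ to upper bound the constant term by $C = \tfrac{1}{2}\sum_n\bigl([L_n^{\tsf{max}}]^2 + [L_n^{\tsf{av}}]^2\bigr)$. This yields the one-slot drift
\begin{eqnarray}
V(\tau+1) - V(\tau) \leq C - \sum_n Q_n(\tau)\bigl(L_n(k,t) - L_n^{\tsf{av}}\bigr). \nonumber
\end{eqnarray}

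Finally I would telescope this bound over $t = 0, 1, \ldots, T-1$ (so $\tau$ ranges over $t_k, \ldots, t_k + T - 1$), pick up a constant contribution of $CT$, and take $\mathbb{E}[\,\cdot\,|\bv{Q}(t_k)]$ on both sides to obtain \eqref{eq:drift-bound}. Note that the queue levels $Q_n(t_k+t)$ appearing in the sum for $t \geq 1$ are themselves random given $\bv{Q}(t_k)$, but that causes no difficulty since they are measurable under the conditional expectation. The main (only) obstacle is keeping the signs straight through the case analysis on the truncation; once the per-slot inequality is in hand, the rest is bookkeeping.
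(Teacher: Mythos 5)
Your proposal is correct and follows essentially the same route as the paper's proof in Appendix~C: square the queue update \eqref{eq:queue-dynamic}, bound the cross terms using $L_n(k,t)\leq L_n^{\tsf{max}}$, sum over $n$ and over the $T$ slots of the frame, and take the conditional expectation given $\bv{Q}(t_k)$. Your explicit case analysis on whether the truncation $[\cdot]^+$ is active is a welcome extra level of rigor that the paper glosses over, but it does not change the argument.
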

\begin{proof}
See Appendix C. 
\end{proof}
%\begin{lemma}\label{lemma:pure-drift}
%For any value $k=0, 1, ...,$ we have:
%\begin{eqnarray}
%\hspace{-.2in}&&\Delta(t_k) \label{eq:drift-bound}\\
%\hspace{-.2in}&&\quad \leq CT - \sum_{t=0}^{T-1}\expect{\sum_{n}Q_{n}(t_k)\big[  L_n(k, t) - L^{\tsf{av}}_n\big]\left.|\right. \bv{Q}(t_k)}.\hspace{-.05in} \nonumber
%\end{eqnarray}
%Here $C=\frac{T}{2}\sum_{n}\big([L_n^{\tsf{max}}]^2 + [L_n^{\tsf{av}}]^2\big)$.  $\Diamond$
%\end{lemma}
%\begin{proof}
%See Appendix B . 
%\end{proof}

To construct our algorithm, we choose a parameter $\epsilon>0$, which represents how close we want our performance to be to the optimal value. We then define $\eta\triangleq1/\epsilon$ and substract from both sides of (\ref{eq:drift-bound}) the term $\eta \expect{\text{Welfare}(k) \left.|\right. \bv{Q}(t_k)}$ to get: 
\begin{eqnarray*}
&& \Delta(t_k) -\eta\expect{\text{Welfare}(k) \left.|\right. \bv{Q}(t_k)} \\
&&\,\,\, \leq CT -\eta\expect{\text{Welfare}(k) \left.|\right. \bv{Q}(t_k)}   \\
&& \hspace{-.45in} \qquad\quad\,\,\,- \sum_{t=0}^{T-1}\expect{\sum_{n}Q_{n}(t_k+t)\big[  L_n(k, t) - L^{\tsf{av}}_n\big] \left. |\right. \bv{Q}(t_k)}.
\end{eqnarray*}
Now we plug in the definition of $\text{Welfare}(k)$ in  (\ref{eq:welfare-def})  to get:
\begin{eqnarray}
\hspace{-.3in}&& \Delta(t_k) -\eta\expect{\text{Welfare}(k) \left.|\right. \bv{Q}(t_k)} \nonumber   \\
\hspace{-.3in}&&\,\,\, \leq CT -\eta\sum_{t=0}^{T-1}\expect{\sum_{n=1}^N U_{n}(L_n(k, t), t) - \text{Cost}(k, t)\left.|\right. \bv{Q}(t_k)} \nonumber \\
\hspace{-.3in}&& \hspace{-.45in} \qquad\quad\,\,\,- \sum_{t=0}^{T-1}\expect{\sum_{n}Q_{n}(t_k+t)\big[  L_n(k, t) - L^{\tsf{av}}_n\big] \left.|\right. \bv{Q}(t_k)}. \label{eq:drift-utility-0}
\end{eqnarray}
We can now construct our algorithm by choosing the actions that  minimize the RHS of (\ref{eq:drift-utility-0}). However, directly doing so will require solving the following dynamic programming problem:
\begin{eqnarray}
\hspace{-.3in}&&\Phi(\bv{p}(k))\triangleq\max:   \sum_{t=0}^{T-1}\expect{\sum_{n=1}^N \eta U_{n}(L_n(k, t), t) - \eta\text{Cost}(k, t)\nonumber\\
\hspace{-.3in}&&\qquad\qquad\qquad\qquad\,\,\,  + \sum_nQ_{n}(t_k+t)  L^{\tsf{d}}_n(p(k, t), t) \left.|\right. \bv{Q}(t_k)} \nonumber\\
\hspace{-.3in}&&  \hspace{-.8in} \qquad\qquad \qquad \text{s.t.} \quad  \bv{p}(k)\in\script{P}. \label{eq:wma-max}
\end{eqnarray}
Here $\bv{p}(k) = (p(k, t), t=0, ..., T-1)$ is the price vector of day $k$, and $\script{P}$ is the set of feasible prices defined in Section \ref{section:model}. 

%\color{red} *** Here, the constraint set $\script{P}$ may require some clarification.  Is this set just $[0, p_{\max}]^T$? *** \color{black}
%
Therefore, we instead use an alternative method to \emph{approximately} solve this dynamic programming problem, in which case our actions approximately minimize the RHS of (\ref{eq:drift-utility-0}). 
The intuition behind our approach is that, since the queue sizes will only change by a finite amount in any slot,  instead of using the exact deficit queue value in very time slot, we use the  ``less updated''   information $\bv{Q}(t_k)$. By doing so, we can  still approximately minimize the RHS of (\ref{eq:drift-utility-0}). 
However, this approximation not only enables us to decouple the pricing decisions across time slots, which greatly reduces the computational complexity of our algorithm, but also allows us to solve the pricing problem ((\ref{eq:wma-max-approx}) defined below) in a distributed manner using using the ``consistency price''  approach in  \cite{chiang-consistency} in a distributed manner without the users submitting their utility functions to the utility company. This makes our algorithm very suitable for practical implementation. 

%The intuition behind (\ref{eq:wma-max-approx}) in \tsf{WMA} is that, since the queue sizes will only change by a finite amount,  even if we were to use the  ``less updated''   information $\bv{Q}(t_k)$, we can  still approximately minimize the RHS of (\ref{eq:drift-utility-0}). 
%
%Note that using (\ref{eq:wma-max-approx}) not only enables us to decouple the pricing decisions across time slots, which greatly reduces the computational complexity of our algorithm, but also allows us to solve the pricing problem in a distributed manner using using the ``consistency price''  approach in  \cite{chiang-consistency} in a distributed manner without the users submitting their utility functions to the utility company, which makes \tsf{WMA} very suitable for practice implementation. 

%
%Doing so leads to the following algorithm, in which 
Below, for the ease of presentation, we denote both $L_n(k, t)$ and $U_n(L, t)$ as random functions of the price for convenience: 
\begin{eqnarray}
L^{\tsf{d}}_n(k, t) = L^{\tsf{d}}_n(p(k, t), t), \,\, U_n(L, t) = U_n(p(k, t), t). 
\end{eqnarray}
%and directly work with them. 
%In the algorithm, we write 
%Our algorithm is as follows:

\underline{\tsf{Welfare Maximization Algorithm (WMA):}} 
\begin{itemize}
\item \underline{\textbf{On Day $k-1$:}} the utility company performs pricing and and the users perform consumption planning for day $k$: 
%perform: 

\begin{itemize}
\item \emph{Utility-Company-Pricing:} Based on its knowledge of the user demand, the utility company chooses the price $p(k, t)$ for time $t_k + t$, so as to maximize: 
\begin{eqnarray}
\hspace{-.7in}&&\Phi^{\tsf{A}}(p(k, t))\triangleq\max:   \expect{\sum_{n=1}^N \eta U_{n}(p(k, t), t) - \eta\text{Cost}(k, t)\nonumber\\
\hspace{-.7in}&&\qquad\qquad\qquad\qquad\quad  + \sum_nQ_{n}(t_k)  L^{\tsf{d}}_n(p(k, t), t) \left.|\right. \bv{Q}(t_k)} \nonumber\\
\hspace{-.7in}&& \qquad\qquad\qquad \text{s.t.} \quad \bv{p}(k) \in \script{P}. \label{eq:wma-max-approx}
\end{eqnarray}

%\color{red} **** Shouldn't you write $\Phi^{\tsf{A}}(p(k,t))$ and maximize subject to the possible values of that price?  Also, you could explain
%that the coupling across the $T$ time steps through the load deficit is now relaxed by assuming that this deficit remains essentially constant during the day? *** \color{black}

\item \emph{Utility-Company-Day-Ahead-Power-Procurement:} Observe $(\bv{\beta}(k), \bar{\bv{\alpha}}(k))$, procure base-power $B(k, t)$ according to (\ref{eq:opt-B-def}) using knowledge of $L(k, t)$ and $Z(t)=X(t)-w(t)$. 

\item \emph{User-Consumption-Planning:} Choose the load $L^{\tsf{d}}_n(k, t)$  according to (\ref{eq:user-choice}), i.e.,  
\begin{eqnarray}
\hspace{-.7in}&&L^{\tsf{d}}_n(k, t)= \min\argmax{L_n^{\tsf{min}}(t)\leq L\leq L_n^{\tsf{d, max}}} \expect{U_n(L+w_n(k, t), t) \label{eq:user-choice-recap}\\
\hspace{-.7in}&&\qquad\qquad\qquad\qquad\qquad\qquad\quad  - p(k, t) (L+w_n(k, t))}.\nonumber
\end{eqnarray}

\end{itemize}

\item \underline{\textbf{On Day $k$:}} The user consumes power, and 
the utility company performs real-time power balancing and deficit queue update:

\begin{itemize}
\item \emph{User-Power-Consumption:} Each user consumes an amount of power $L^{\tsf{d}}_n(n, k)+w_n(k, t)$. 
\item \emph{Real-Time-Power-Purchasing-and-Deficit-Update:} The utility company procures power from the real-time market if necessary,  and updates the deficit queues according to (\ref{eq:queue-dynamic}).  $\Diamond$
\end{itemize}
\end{itemize}
% 
%We note that implementing \tsf{WMA} involves solving the problem (\ref{eq:wma-max}) which is a dynamic programming problem. To reduce complexity, we also develop the following algorithm, which we ca
Note that to implement the \tsf{WMA} algorithm, the utility company only has to use the instant value of market prices $(\bv{\beta}(k), \bar{\bv{\alpha}}(k))$, and does not need to know their distribution. This greatly simplifies the implementation of the algorithm and makes it more robust. 
%and the  statistical knowledge of the renewable power $X(t)$ and the user load variation $w_n(t)$. It does no need to know the distributions of the day-ahead and real-time prices. 
%
% 

%applicable in practice. 

% \subsection{When demand is random} \label{subsection:random-demand}

\section{Performance Analysis}\label{section:analysis}
In this section, we analyze the performance of   \tsf{WMA}. We first have the following theorem, which states that the optimal social welfare can be achieved by a stationary and randomized algorithm $\Pi^*$ of the following structure: on any day $k-1$, the utility company observes the day-ahead and real-time price vector $(\bv{\beta}(k), \bar{\bv{\alpha}}(k))$, and chooses the price vector  i.i.d. from some countable subset $\{(p_{mt}^{(\bv{\beta}, \bar{\bv{\alpha}})}, t=0, ..., T-1), m=1, ..., \infty\}$ of $\script{P}$ according to some distribution $\{(a_{mt}^{(\bv{\beta}, \bar{\bv{\alpha}})}, t=0, ..., T-1), m=1, ..., \infty\}$, both potentially depend on $(\bv{\beta}(k), \bar{\bv{\alpha}}(k))$. 
%$\{p^h_{kt}, h=1, 2, ..., \infty\}$ for time $t$ at time $k$ with probabilities $\{a^h_{kt},  h=1, 2, ..., \infty\}$. 
Then, under any prices, the utility company procures the base-power according to (\ref{eq:opt-B-def}). 

% that has the following structure: at every frame $k-1$, the utility company chooses the prices $p^h_{kt}$ for time $t$ at time $k$ with probabilities $a^h_{kt}$. Under any prices, it then procures the base-power according to (\ref{eq:power-cost}). The users 
\begin{theorem}\label{theorem:optimal-policy}
The optimal system welfare $\text{Welfare}^*_{\tsf{av}}$ is given by the following optimization problem. 
%\color{red} *** Question:  When you define these averages, don't you have to take $\lim \inf \frac{1}{K} \sum_{k = 0}^{K - 1}$? **** \color{black}
\begin{eqnarray}
\hspace{-.55in}&&\quad\,\,\text{Welfare}_{\tsf{av}} =\sup \big\{\text{Utility}_{\tsf{av}} - \text{Cost}_{\tsf{av}}\}  \label{theorem-eq-obj}\\
\hspace{-.55in}&& \text{s.t.} \,\, \,\,\text{Utility}_{\tsf{av}} =  \expect{ \sum_{t=0}^{T-1}\sum_{m=1}^{\infty}a_{mt}^{\bv{(\beta}, \bar{\bv{\alpha}})} \sum_n\expectm{U_n(p_{nmt}^{(\bv{\beta}, \bar{\bv{\alpha}})}, t)} },  \label{theorem-eq-utility}\\
\hspace{-.55in}&&\qquad\,\,\, \text{Cost}_{\tsf{av}} = \expect{ \sum_{t=0}^{T-1}\sum_{m=1}^{\infty}a_{mt}^{(\bv{\beta}, \bar{\bv{\alpha}})} \sum_n\expectm{\text{Cost}(p_{nmt}^{(\bv{\beta}, \bar{\bv{\alpha}})}, t)} },  \label{theorem-eq-cost}\\
\hspace{-.55in}&&\qquad\,  \quad TL_n^{\tsf{av}}\leq \expect{\sum_{t=0}^{T-1}\sum_{m=1}^{\infty}a_{mt}^{(\bv{\beta}, \bar{\bv{\alpha}})} \sum_n L_n^{\tsf{d}}(p_{nmt}^{(\bv{\beta}, \bar{\bv{\alpha}})}, t)},\,\forall\, n,  \label{theorem-eq-qou}\\
\hspace{-.55in}&&\qquad\,\,\,\, p_{nmt}^{(\bv{\beta}, \bar{\bv{\alpha}} )} = p_{mt}^{(\bv{\beta}, \bar{\bv{\alpha}} )},\,\,\forall\, n, m, t,  \label{theorem-eq-price}\\
\hspace{-.55in}&&\qquad\,\,\,\, a_{mt}^{(\bv{\beta}, \bar{\bv{\alpha}})}\geq0, \sum_{m=1}^{\infty}a_{mt}^{(\bv{\beta}, \bar{\bv{\alpha}})}=1, \,\forall\, t,  (\bv{\beta}, \bar{\bv{\alpha}}),\\
\hspace{-.55in}&&\qquad\,  (p_{nmt}^{(\bv{\beta}, \bar{\bv{\alpha}})}, t=0, ..., T-1) \in\script{P}, \,\forall\, (\bv{\beta}, \bv{\alpha}), m, n \\ 
\hspace{-.55in}&&\qquad\,  (p_{mt}^{(\bv{\beta}, \bar{\bv{\alpha}})}, t=0, ..., T-1) \in\script{P}, \,\forall\, (\bv{\beta}, \bv{\alpha}), m. 
\end{eqnarray}
Here $\sup\{\}$ denotes the supremum. The outside expectations are taken over the day-ahead and real-time price vectors $(\bv{\beta}(k), \bar{\bv{\alpha}}(k))$. The inside expectations are taken over the user usage variations $w_n(t)$. $\Diamond$
\end{theorem}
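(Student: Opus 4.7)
The plan is to establish Theorem \ref{theorem:optimal-policy} via the standard two-sided argument for such stochastic optimization problems: (i) an achievability direction showing that any feasible mixture satisfying (\ref{theorem-eq-utility})--(\ref{theorem-eq-qou}) is realized by some actual policy, and (ii) a converse showing that no feasible policy can exceed the stated supremum. The first reduction I would invoke is Lemma \ref{lemma:value-renewable}, which eliminates the base-power decision: given any price vector $\bv{p}(k)$ and observed market state $(\bv{\beta}(k), \bar{\bv{\alpha}}(k))$, the optimal base-power is deterministically given by (\ref{eq:opt-B-def}), and the resulting expected cost depends only on $\bv{p}(k)$, the market state, and the user-load distributions. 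This collapses WMP into the problem of choosing a pricing rule as a function of the observed market state.

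For the converse, I would associate with each day $k$ the per-day outcome vector consisting of $\expectm{U_n(p(k,t),t)}$, $\expectm{\text{Cost}(p(k,t),t)}$, and $\expectm{L_n^{\tsf{d}}(p(k,t),t)}$, viewed as functions of the market state and the selected price. Because $(\bv{\beta}(k), \bar{\bv{\alpha}}(k))$ is i.i.d. over a finite set $\script{P}^{\tsf{m}}$ and the user-side randomness $w_n(t)$ is independent of everything else, the long-run $\liminf$ averages achievable by any feasible policy lie in the closed convex hull (computed fiberwise over each market state) of the per-day outcome vectors attainable by a single price vector in $\script{P}$. Applying Carath\'eodory's theorem fiberwise yields a representation as a countable (in fact finite per state) mixture $\{a_{mt}^{(\bv{\beta}, \bar{\bv{\alpha}})}\}$ over price vectors $\{p_{mt}^{(\bv{\beta}, \bar{\bv{\alpha}})}\}$, which is exactly the parametric form in (\ref{theorem-eq-utility})--(\ref{theorem-eq-qou}). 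The QoU constraint (\ref{eq:min-def-load}) translates into (\ref{theorem-eq-qou}) after taking $\liminf$ of the Ces\`aro sums, and the common-price constraint (\ref{theorem-eq-price}) is automatic since one price vector is broadcast to all users.

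For achievability, given any feasible mixture, I would construct the stationary randomized policy $\Pi^*$ that on day $k-1$, upon observing $(\bv{\beta}(k), \bar{\bv{\alpha}}(k))$, draws an index $m$ and the corresponding price vector from $\{a_{mt}^{(\bv{\beta}, \bar{\bv{\alpha}})}\}$ independently across days, and then procures base-power via (\ref{eq:opt-B-def}). By the i.i.d.\ structure of market states and users' noise, the strong law of large numbers (combined with Fatou's lemma to handle the nonnegative QoU shortfall) shows that the realized time-average welfare equals the objective (\ref{theorem-eq-obj}) and that (\ref{eq:min-def-load}) holds in $\liminf$. Taking the supremum over feasible mixtures then matches the converse bound and yields the theorem.

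The main technical obstacle is justifying the reduction of arbitrary, possibly history-dependent, feasible policies to this stationary randomized parameterization, given that $\script{P}$ is an uncountable compact subset of $\mathbb{R}^T$. The standard workaround, explaining the countable family and the use of $\sup$ rather than $\max$ in the statement, is to work with a countable dense subset of $\script{P}$ and pass to the limit using continuity of the utility, load, and cost functionals in $p$, together with uniform bounds on loads, costs, and utilities; this is the same device used in \cite{neelynowbook} to establish analogous characterizations of the optimal average reward. The remaining bookkeeping, namely verifying that the mixture's expected cost equals $\beta B^* + \bar{\alpha} \mathbb{E}[Y]$ via Lemma \ref{lemma:value-renewable}, is routine.
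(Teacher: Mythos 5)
Your proposal follows essentially the same route as the paper, which simply invokes Carath\'eodory's theorem and the standard characterization of optimal stationary randomized policies from \cite{neelynowbook} without giving details. Your two-sided argument (converse via fiberwise convex hulls over the finite market-state set plus Carath\'eodory, achievability via the stationary randomized policy and the law of large numbers, with Lemma \ref{lemma:value-renewable} collapsing the base-power decision and a countable dense subset of $\script{P}$ handling the uncountable price set) is a correct and more explicit rendering of exactly that argument.
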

\begin{proof}
The proof is done by using Caratheodory's theorem and is similar to \cite{neelynowbook}.  Hence, the details are omitted for brevity. 
\end{proof}

We see that the constraint (\ref{theorem-eq-price}) requires that  \emph{ the prices to all the users are the same for all time.} From this result, we get the following corollary, which characterizes the welfare loss due to the equal price constraint, which we call \emph{price-of-single-price}, and denote as $\tsf{PoSP}$. 
\begin{coro}
Denote $\text{Welfare}^{\tsf{diff}}_{\tsf{av}}$ the optimal social welfare when the  utility company's prices can be set differently for each individual user, and $\text{Welfare}^{\tsf{r}}_{\tsf{av}}$ the optimal value of  of (\ref{theorem-eq-obj}) without the constraint (\ref{theorem-eq-price}). 
Then, we have: 
\begin{eqnarray}
\text{Welfare}^{\tsf{diff}}_{\tsf{av}} = \text{Welfare}^{\tsf{r}}_{\tsf{av}}\geq \text{Welfare}^{*}_{\tsf{av}}, 
\end{eqnarray}
and the \emph{price-of-linear-pricing} is given by: 
%$\text{Welfare}^{\tsf{diff}}_{\tsf{av}}$ can be computed by solving 
%
%the optimal social welfare will be given by $\text{Welfare}^{\tsf{diff}}_{\tsf{av}}$, which is the optimal solution of (\ref{theorem-eq-obj}) without the constraint (\ref{theorem-eq-price}). Hence, the \emph{price-of-linear-pricing} is given by:
\begin{eqnarray}
\tsf{PoSP} = \text{Welfare}^{\tsf{r}}_{\tsf{av}} - \text{Welfare}^*_{\tsf{av}}. \Diamond\label{eq:plp}
\end{eqnarray}
\end{coro}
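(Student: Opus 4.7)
The plan is to dispose of the inequality first, since it is immediate, and then focus on the equality, which carries the real content. The inequality $\text{Welfare}^{\tsf{r}}_{\tsf{av}} \geq \text{Welfare}^{*}_{\tsf{av}}$ follows simply from the fact that the program defining $\text{Welfare}^{\tsf{r}}_{\tsf{av}}$ is obtained from the program of Theorem \ref{theorem:optimal-policy} by \emph{dropping} constraint (\ref{theorem-eq-price}). Any feasible point of the constrained program is feasible for the relaxed program with the same objective value, so the supremum can only weakly increase.

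For the equality $\text{Welfare}^{\tsf{diff}}_{\tsf{av}} = \text{Welfare}^{\tsf{r}}_{\tsf{av}}$, the idea is to re-run the characterization of Theorem \ref{theorem:optimal-policy} in the setting where the utility company may post a separate price $p_n(k,t)$ to each user $n$. All structural features of the model that drove the proof of Theorem \ref{theorem:optimal-policy} are preserved: each user $n$'s intended load $L_n^{\tsf{d}}(p_n(k,t),t)$ depends only on the price he sees (via (\ref{eq:user-choice})); the instantaneous utility decomposes additively over $n$; and by Lemma \ref{lemma:value-renewable}, after optimal base-power procurement, the expected cost depends on the prices only through the aggregate intended load $L^{\tsf{d}}(k,t)=\sum_n L_n^{\tsf{d}}(p_n(k,t),t)$. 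Consequently, the same Caratheodory-type argument sketched for Theorem \ref{theorem:optimal-policy} applies and yields a characterization of $\text{Welfare}^{\tsf{diff}}_{\tsf{av}}$ as a supremum over countable mixtures of pure per-user price vectors $(p_{nmt}^{(\bv{\beta},\bar{\bv{\alpha}})})_{n,t}$ with weights $(a_{mt}^{(\bv{\beta},\bar{\bv{\alpha}})})_m$.

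The resulting program is \emph{exactly} (\ref{theorem-eq-obj})--(\ref{theorem-eq-qou}) with (\ref{theorem-eq-price}) removed, so $\text{Welfare}^{\tsf{diff}}_{\tsf{av}}=\text{Welfare}^{\tsf{r}}_{\tsf{av}}$ holds by construction. The identity $\tsf{PoSP} = \text{Welfare}^{\tsf{r}}_{\tsf{av}} - \text{Welfare}^*_{\tsf{av}}$ is then just the definition of the price-of-single-price rewritten via the already-established equality.

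The main obstacle I anticipate is being precise about the policy class over which $\text{Welfare}^{\tsf{diff}}_{\tsf{av}}$ is defined, and checking that allowing per-user prices does not couple nontrivially with either the day-ahead procurement step (which sees the users only through the aggregate load, thanks to Lemma \ref{lemma:value-renewable}) or the per-user QoU constraint (which is naturally separable in $n$). Once these two decouplings are made explicit, the Caratheodory-based derivation of Theorem \ref{theorem:optimal-policy} extends verbatim, with the per-user price index $n$ carried through the calculation, and no new technical ingredient is required.
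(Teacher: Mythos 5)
Your proposal is correct and follows the same route the paper intends: the paper gives no explicit proof of this corollary, presenting it as an immediate consequence of Theorem \ref{theorem:optimal-policy}, and your argument (relaxation gives the inequality; re-running the Caratheodory characterization with per-user prices, using the additive utility, the per-user separable QoU constraint, and the fact that via Lemma \ref{lemma:value-renewable} the cost depends on prices only through the aggregate intended load, gives the equality) is exactly the reasoning being left implicit. Your explicit attention to the decoupling through the aggregate load in the procurement step is a useful addition, but introduces no new technique beyond what the paper already uses for Theorem \ref{theorem:optimal-policy}.
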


We can now present the performance results of the \tsf{WMA} algorithm in the following theorem. 
\begin{theorem}\label{theorem:wma-per} (\tsf{WMA} with i.i.d. market prices) 
%Under \tsf{WMA}, we have the following: 
The average welfare achieved by \tsf{WMA} with $\eta=1/\epsilon$ satisfies: 
\begin{eqnarray}
\hspace{-.2in}\text{Welfare}_{\tsf{av}}^{\tsf{WMA}} \geq \text{Welfare}^*_{\tsf{av}} - C_1T/\eta. \label{eq:welfare-bound}
\end{eqnarray}
Here $C_1=\frac{T}{2}\sum_{n}\big([L_n^{\tsf{max}}]^2 + [L_n^{\tsf{av}}]^2\big)$ is a constant independent of $\eta$. 
Moreover, for all $t_k, k\geq0$ and $t$, we have: 
\begin{eqnarray}
\hspace{-.2in}\sum_{n=1}^NQ_n(t_k+t) \leq   \delta_{\tsf{max}} N\gamma^2\eta  +T\sum_n L_n^{\tsf{av}}.\Diamond\label{eq:queue-bound}
\end{eqnarray}
\end{theorem}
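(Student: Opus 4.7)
The plan is to establish both bounds via the standard Lyapunov drift-plus-penalty approach, with an extra step to absorb the two-timescale approximation error that arises because \tsf{WMA} uses the day-start queue state $\bv{Q}(t_k)$ rather than the up-to-date $\bv{Q}(t_k+t)$ inside each day.

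For the welfare bound (\ref{eq:welfare-bound}), I start from the drift-plus-penalty inequality already derived in the text (the unnumbered inequality preceding (\ref{eq:drift-utility-0})). Note that since each load change per slot is bounded, $|Q_n(t_k+t)-Q_n(t_k)|\leq t\cdot\max(L_n^{\tsf{max}},L_n^{\tsf{av}})$, so replacing the per-slot queues $Q_n(t_k+t)$ by $Q_n(t_k)$ introduces only an additive constant, which can be absorbed into $C_1$. Thus the RHS that \tsf{WMA} actually minimizes (problem (\ref{eq:wma-max-approx})) is within an additive constant times $T$ of the true drift-plus-penalty RHS. Because \tsf{WMA} chooses actions to maximize $\Phi^{\tsf{A}}(p(k,t))$ over $\script{P}$, any other feasible action yields a no-larger objective after sign flip; in particular, plugging in the stationary randomized optimal policy $\Pi^*$ from Theorem \ref{theorem:optimal-policy} gives $\sum_t \expectm{\sum_n\eta U_n-\eta\text{Cost}\mid\bv{Q}(t_k)}\geq \eta T\,\text{Welfare}^*_{\tsf{av}}$, while constraint (\ref{theorem-eq-qou}) ensures $\expectm{L_n(k,t)-L_n^{\tsf{av}}\mid\bv{Q}(t_k)}\geq 0$ under $\Pi^*$, killing the queue-weighted term. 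Combining yields
\begin{equation*}
\Delta(t_k)-\eta\expectm{\text{Welfare}^{\tsf{WMA}}(k)\mid\bv{Q}(t_k)} \leq C_1 T - \eta T\,\text{Welfare}^*_{\tsf{av}}.
\end{equation*}
Taking iterated expectation, telescoping from $k=0$ to $K-1$, dividing by $\eta K T$ and sending $K\to\infty$ using $V(\cdot)\geq 0$ and $V(0)=0$, I obtain (\ref{eq:welfare-bound}).

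For the sample-path queue bound (\ref{eq:queue-bound}) I use a threshold/dichotomy argument. Because each queue increases by at most $L_n^{\tsf{av}}$ per slot and \tsf{WMA} updates $\bv{Q}(t_k)$ only at day boundaries, it suffices to prove $\sum_n Q_n(t_k)\leq \delta_{\tsf{max}} N\gamma^2\eta$ for all $k$; the intra-day slack $T\sum_n L_n^{\tsf{av}}$ then follows. I argue by induction on $k$: if the bound holds at $t_k$, then either it persists through day $k$, or else there is a first slot where $\sum_n Q_n(t_k+t)$ exceeds the threshold. In the latter case, some $Q_{n^\star}(t_k)$ must already be of order $\delta_{\tsf{max}}\gamma^2\eta/\gamma$; because the queue-weighted term $\sum_n Q_n(t_k)L_n^{\tsf{d}}(p,t)$ in (\ref{eq:wma-max-approx}) is then bounded below by $Q_{n^\star}(t_k)L_{n^\star}^{\tsf{d}}(p,t)$, and because the heterogeneity condition (\ref{eq:hetero}) forces $L_m^{\tsf{d}}(p,t)\geq L_{n^\star}^{\tsf{d}}(p,t)/\gamma$ for every other user $m$, the unique optimizer of (\ref{eq:wma-max-approx}) must push $L_{n^\star}^{\tsf{d}}$ to $L_{n^\star}^{\tsf{d,max}}$, since the utility/cost terms are bounded by $\eta T(NU_{\tsf{max}}+\delta_{\tsf{max}}L_{\tsf{max}})$, which is dominated by the queue term once $Q_{n^\star}(t_k)/\gamma\geq \delta_{\tsf{max}} N\gamma\eta$. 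Then assumption (\ref{eq:load-cond}) gives $L_m(k,t)\geq L_m^{\tsf{av}}$ for every $m$, so every queue weakly decreases, contradicting the choice of that slot.

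The main obstacle is getting the constant and the $\gamma^2$ factor right in the queue bound: one factor of $\gamma$ comes from comparing the maximum queue to the sum of queues when establishing dominance of the queue term, and a second from pulling $L_n^{\tsf{d}}$ under the heterogeneity bound (\ref{eq:hetero}) to conclude every user's intended load is pushed to its maximum. A secondary technicality is tracking the approximation error $|Q_n(t_k+t)-Q_n(t_k)|$ in the welfare bound cleanly enough that it yields the clean constant $C_1=\tfrac{T}{2}\sum_n([L_n^{\tsf{max}}]^2+[L_n^{\tsf{av}}]^2)$ stated in the theorem; everything else is routine manipulation of the Lyapunov drift.
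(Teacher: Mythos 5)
Your proof of the welfare bound (\ref{eq:welfare-bound}) follows the paper's route essentially verbatim: bound $|Q_n(t_k+t)-Q_n(t_k)|$ to show the decoupled per-slot problem (\ref{eq:wma-max-approx}) approximately solves (\ref{eq:wma-max}) up to an additive constant (the paper's Lemma \ref{lemma:approx-dp}), plug the stationary randomized policy $\Pi^*$ of Theorem \ref{theorem:optimal-policy} into the drift-plus-penalty inequality, telescope, and take limits. The only blemish is a normalization slip: $\text{Welfare}^*_{\tsf{av}}$ as defined in (\ref{eq:avg-welfare}) is already a per-day quantity (the sum over the $T$ slots is inside $\text{Welfare}(k)$), so the comparison term is $\eta\,\text{Welfare}^*_{\tsf{av}}$, not $\eta T\,\text{Welfare}^*_{\tsf{av}}$, and one divides the telescoped sum by $\eta K$, not $\eta KT$; done consistently this yields exactly the stated constant $C_1T/\eta$.

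The queue bound is where there is a genuine gap. Your reduction --- ``it suffices to prove $\sum_n Q_n(t_k)\leq \delta_{\tsf{max}}N\gamma^2\eta$ for all $k$'' --- is not provable, and the contradiction argument built on it does not close. The reason is causal: \tsf{WMA} fixes the entire price vector for day $k$ as a function of $\bv{Q}(t_k)$ alone, so it cannot react to the ``first slot where $\sum_n Q_n(t_k+t)$ exceeds the threshold.'' If $\sum_n Q_n(t_k)$ sits just below $\delta_{\tsf{max}}N\gamma^2\eta$, the dominance condition is not triggered, the chosen prices may induce loads as small as $L_n^{\tsf{min}}(t)<L_n^{\tsf{av}}$, and the sum can drift up to $\delta_{\tsf{max}}N\gamma^2\eta+T\sum_nL_n^{\tsf{av}}$ by the end of the day --- violating your claimed day-boundary invariant. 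Moreover, the trigger you use for the dominance step, $Q_{n^\star}(t_k)/\gamma\geq\delta_{\tsf{max}}N\gamma\eta$, i.e.\ a \emph{single} queue exceeding $\delta_{\tsf{max}}N\gamma^2\eta$, is strictly stronger than anything the first-crossing event delivers. The correct structure (the paper's) is to carry the \emph{full} bound $\delta_{\tsf{max}}N\gamma^2\eta+T\sum_nL_n^{\tsf{av}}$ as the inductive invariant at day boundaries and dichotomize on $\sum_nQ_n(t_K)$ versus $\delta_{\tsf{max}}N\gamma^2\eta$: below the threshold, the intra-day growth of at most $T\sum_nL_n^{\tsf{av}}$ gives the bound directly (this is where the additive slack comes from --- it is part of the invariant, not a corollary of a tighter one); above the threshold, two applications of (\ref{eq:hetero}) give $\sum_nQ_n(t_K)L^{\tsf{d}}_n\geq\sum_nQ_n(t_K)L^{\tsf{d}}_{n^*}/\gamma>\eta\delta_{\tsf{max}}N\gamma L^{\tsf{d}}_{n^*}\geq\eta\delta_{\tsf{max}}\sum_nL^{\tsf{d}}_n\geq\expectm{\eta\,\text{Cost}(K,t)}$, so the objective is increasing in every load (the utility terms are increasing in $L$ and need not be dominated --- you do not need a bound $U_{\tsf{max}}$), the price is driven to $0$, every user consumes $L_n^{\tsf{d,max}}\geq L_n^{\tsf{av}}+w_n^{\tsf{max}}$ by (\ref{eq:load-cond}), and the queues are non-increasing throughout day $K$.
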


We first see from Theorem \ref{theorem:wma-per} that the average welfare achieved by  \tsf{WMA} is within $O(\epsilon)$ of the maximum. Thus, as we decrease the value of $\epsilon$ (or equivalently, increase the value of $\eta$), we can push the average welfare to be arbitrarily close to the maximum value. On the other hand, we also see from (\ref{eq:queue-bound}) that the deficit queues are \emph{deterministically} upper bounded, implying that the users' QoU will be guaranteed with \emph{bounded} deficit. 
Indeed, each user roughly has to defer an aggregate load of $\delta_{\tsf{max}}\gamma^2\eta+TL_n^{\tsf{av}}$. This provides explicit delay guarantees to the users,  and is very useful in practice. 
Note that  the bound (\ref{eq:queue-bound})  is indeed a worst case \emph{upper} bound.  As we will see in Section \ref{section:sim}, the actual average deficit queue size can be much smaller. 
%Indeed, since the deficit queues has constant inputs, we notice that 

Below we prove the queueing bound (\ref{eq:queue-bound}), the proof of (\ref{eq:welfare-bound}) will be presented in Appendix D. 

\begin{proof} (Theorem \ref{theorem:wma-per}) 
To prove (\ref{eq:queue-bound}), we show that it holds for every time $t_k, k\geq1$, using induction. 
First, we see that it holds for $k=1$ because $\sum_nQ_n(0)=0$ and it increases by at most $\sum_nL_n^{\tsf{av}}$ every time slot. Hence $\sum_nQ_n(t_1)\leq T\sum_{n}L_n^{\tsf{av}}$. 

Now suppose  (\ref{eq:queue-bound}) holds for $k=1, ..., K$, we prove that it also holds for $k=K+1$. 

(I) First, if $\sum_nQ_{n}(t_{K})\leq \eta \delta_{\tsf{max}} N\gamma^2$, then (\ref{eq:queue-bound}) holds for $t_{K+1}$. This is because the maximum increase of $\sum_nQ_n(t)$ over any day is $T\sum_nL_n^{\tsf{av}}$. 

(II) Now suppose $\sum_nQ_{n}(t_{K})> \eta \delta_{\tsf{max}} N\gamma^2$. We will show that the term $\expectm{-\eta\text{Cost}(K, t)+\sum_nQ_n(t_K+t)L^{\tsf{d}}_n(k, t)}$ is an increasing function of $L^{\tsf{d}}_n(K, t)$ for all $t$ on day $K$.  
Thus,  \tsf{WMA} will encourage the users to consume more loads, which will then reduce the deficit queue sizes. 
To do so, we first see by Lemma \ref{lemma:value-renewable} that:  
\begin{eqnarray} 
\expectm{\eta\text{Cost}(K,  t)} \leq \eta\delta_{\tsf{max}} \sum_nL^{\tsf{d}}_n(K, t).  \label{eq:cost-bounded} 
\end{eqnarray}
% 
%Now if there exists a user $n$ so that $Q_n(t_{K-1})> \eta \delta_{\tsf{max}} N\gamma$, then we have: 
%\begin{eqnarray}
%\sum_nQ_n(t_k+t)L_n(k, t) &\geq& Q_n(t_k+t) L_n(k, t) \nonumber\\
%&\geq& \eta\delta_{\tsf{max}} N\gamma L_n(k, t)\nonumber\\
%&\geq&  \eta\delta_{\tsf{max}} \sum_{n} L_n(k, t). \label{eq:queue-load-0}
%\end{eqnarray}
%Here the first inequality is due to the non-negativity of $Q_n(t)$ and the third inequality is due to (\ref{eq:hetero}). This together with (\ref{eq:cost-bounded}) imply that the term 
% 
Now we fix a user $n^*$ with load $L^{\tsf{d}}_{n^*}(K, t)$. If $\sum_nQ_n(t_K)L^{\tsf{d}}_n(K, t)> \eta\delta_{\tsf{max}} N\gamma^2$, we can use (\ref{eq:hetero}) to get: 
\begin{eqnarray}
%\hspace{-.2in}
\sum_nQ_n(t_K)L^{\tsf{d}}_n(K, t) &\geq& \sum_nQ_n(t_K) L^{\tsf{d}}_{n^*}(K, t)/\gamma\nonumber\\
&>& \eta\delta_{\tsf{max}} N\gamma^2 L^{\tsf{d}}_{n^*}(K, t)/\gamma\nonumber\\
&>& \eta\delta_{\tsf{max}} \sum_{n} L^{\tsf{d}}_n(K, t). \label{eq:sum-load-bdd}
\end{eqnarray}
%showing that the term $\expectm{-\eta\text{Cost}(K, t)}+\sum_nQ_n(t_K)L^{\tsf{d}}_n(K, t)$ is increasing in $L^{\tsf{d}}_n(K, t)$. 
% 
% 
%Now since $\sum_nQ_{n}(t_{K})> \eta \delta_{\tsf{max}} N \gamma^2 + T\sum_nL_n^{\tsf{max}}$, we have $Q_{n}(t_{K}+t)> \eta \delta_{\tsf{max}} N\gamma$ for all $t=0, ..., T-1$. 
%This together with (\ref{eq:sum-load-bdd}) indicate that if $\sum_nQ_{n}(t_{K})> \eta \delta_{\tsf{max}} N \gamma^2 + T\sum_nL_n^{\tsf{max}}$, then 
This shows that the term $\expectm{-\eta\text{Cost}(K, t)+\sum_nQ_n(t_K)L^{\tsf{d}}_n(K, t)}$ is increasing in $L^{\tsf{d}}_n(K, t)$ throughout day $K$, which implies that the utility company will set the price to $0$, and the users will always chooses $L^{\tsf{d}}_n(K, t)=L_n^{\tsf{d, max}}$. Since  $L_n^{\tsf{d, max}}\geq L_n^{\tsf{av}} + w_n^{\tsf{max}}$, the deficit queue sizes will not further increase on day $K$. This shows that $\sum_nQ_n(t_{K})\geq\sum_nQ_n(t_{K+1})$, which by induction completes the proof of (\ref{eq:queue-bound}). 
\end{proof}

Now we present the performance results of \tsf{WMA} under more general market price processes. 
\begin{theorem}\label{theorem:wma-per-markov} (\tsf{WMA} with Markov market prices) 
%Under \tsf{WMA}, we have the following: 
Suppose $(\bv{\beta}(k), \bar{\bv{\alpha}}(k))$ evolves according to some finite state irreducible and aperiodic Markov chain. 
Then,  \tsf{WMA} with $\eta=1/\epsilon$ achieves: 
\begin{eqnarray}
\hspace{-.2in}\text{Welfare}_{\tsf{av}}^{\tsf{WMA}} &\geq& \text{Welfare}^*_{\tsf{av}} - O(\epsilon), \label{eq:per-bdd-markov}\\
\hspace{-.2in}\sum_{n=1}^NQ_n(t_k+t) &\leq&   \delta_{\tsf{max}} N\gamma^2\eta  +T\sum_n L_n^{\tsf{av}}.\label{eq:q-bdd-markov}\Diamond
\end{eqnarray}
\end{theorem}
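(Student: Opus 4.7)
The plan is to establish the two bounds (\ref{eq:q-bdd-markov}) and (\ref{eq:per-bdd-markov}) separately. The queue bound (\ref{eq:q-bdd-markov}) follows verbatim from the induction argument already given in the proof of Theorem \ref{theorem:wma-per}. That argument rests only on three ingredients: (i) the per-slot growth of $\sum_n Q_n$ is at most $\sum_n L_n^{\tsf{av}}$, (ii) Lemma \ref{lemma:value-renewable} giving $\expectm{\eta\,\text{Cost}(K,t)} \le \eta \delta_{\tsf{max}} \sum_n L_n^{\tsf{d}}(K,t)$, and (iii) the heterogeneity bound (\ref{eq:hetero}). None of these uses the i.i.d.\ assumption on $(\bv{\beta}(k), \bar{\bv{\alpha}}(k))$; they only require the a.s.\ boundedness $0 \le \alpha(k,t),\beta(k,t) \le \delta_{\tsf{max}}$. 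Hence the induction showing $\sum_n Q_n(t_{K+1}) \le \delta_{\tsf{max}} N\gamma^2 \eta + T\sum_n L_n^{\tsf{av}}$ transfers unchanged to the Markov setting.

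For the welfare bound (\ref{eq:per-bdd-markov}), I would replace the one-day ($T$-slot) Lyapunov drift with a multi-day drift over a window of $W$ days, where $W$ is a mixing window chosen later. Since $(\bv{\beta}(k), \bar{\bv{\alpha}}(k))$ is a finite-state, irreducible, aperiodic Markov chain, it mixes geometrically: for any $\delta>0$ one can fix $W=W(\delta)$ so that, regardless of the initial state, the empirical distribution of the chain over any block of $W$ consecutive days is within $\delta$ in total variation of the stationary distribution $\pi$. The optimal average welfare $\text{Welfare}^*_{\tsf{av}}$ is achieved by a stationary randomized policy keyed to $\pi$, in direct analogy with Theorem \ref{theorem:optimal-policy} but with the outer expectation taken under $\pi$.

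Define the $W$-frame drift $\Delta_W(t_k) \triangleq \expectm{V(t_{k+W}) - V(t_k) \mid \bv{Q}(t_k), s(k)}$, where $s(k)$ denotes the Markov state on day $k$. Summing the single-day drift inequality (\ref{eq:drift-bound}) across $W$ days and subtracting $\eta$ times the $W$-day expected welfare yields an analogue of (\ref{eq:drift-utility-0}) with an additional term coming from queue fluctuations \emph{within} the window. Using the queue bound just established, these intra-window fluctuations remain $O(WT\eta)$ and only perturb the welfare comparison by $O(W^2 T^2)$. Because \tsf{WMA} minimizes its per-day contribution using $\bv{Q}(t_k)$, comparing its accumulated actions against the $\pi$-stationary randomized policy over the $W$-day window produces an extra penalty of $O(\delta\,\eta\,W)$ from the empirical-versus-$\pi$ gap. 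Collecting terms and dividing by $W\eta$ yields
\begin{equation*}
\text{Welfare}_{\tsf{av}}^{\tsf{WMA}} \;\ge\; \text{Welfare}^*_{\tsf{av}} - O\!\left(\frac{WT}{\eta}\right) - O(\delta).
\end{equation*}
Choosing $\delta = \Theta(1/\eta)$, which by geometric mixing requires only $W = O(\log \eta)$, yields the claimed $O(\epsilon)$ welfare gap.

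The principal obstacle is the careful joint bookkeeping of the intra-window queue drift and the Markov mixing error. Because \tsf{WMA} freezes the deficit-queue information at the start of each day and the price state is only observed one day ahead, the comparison policy must be built on a $W$-day horizon in which the transient bias of the chain and the accumulated queue evolution are simultaneously controlled; balancing these two error sources so that the welfare gap remains $O(\epsilon)$ while $W$ stays small is the technical heart of the argument.
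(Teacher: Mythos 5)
Your treatment of the queue bound (\ref{eq:q-bdd-markov}) is exactly the paper's: the induction in the proof of Theorem \ref{theorem:wma-per} uses only the deterministic price bound $\delta_{\tsf{max}}$, Lemma \ref{lemma:value-renewable}, and the heterogeneity condition (\ref{eq:hetero}), none of which involves the i.i.d.\ assumption, so it transfers verbatim. For the welfare bound (\ref{eq:per-bdd-markov}) the paper gives no details and simply invokes the multi-slot drift technique of \cite{huangneely_qlamarkovian}; your multi-frame drift sketch is the same strategy in spirit, and your accounting of the two error sources (intra-window queue fluctuation versus deviation of the empirical price distribution from stationarity) is the right bookkeeping.

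There is, however, a quantitative slip in the final balancing. With a deterministic mixing window $W$ and geometric mixing, driving the total-variation error to $\delta$ costs $W = \Theta(\log(1/\delta))$, so your two terms are $O(WT/\eta) + O(\delta)$; setting $\delta = \Theta(1/\eta)$ forces $W = \Theta(\log\eta)$ and the first term becomes $O(T\log(\eta)/\eta) = O(\epsilon\log(1/\epsilon))$, not the claimed $O(\epsilon)$. The way the cited technique actually removes the logarithm is to use \emph{variable-length} frames defined by successive return times of the price chain to a fixed recurrent state: by the renewal--reward theorem the time average of any bounded function of the chain over such a frame equals its stationary expectation exactly, so the mixing-error term $O(\delta)$ disappears entirely, and the only residue is an intra-frame queue-fluctuation term of order $\mathbb{E}\big[T_r^2\big]/\eta$, where $T_r$ is the return time --- a constant for a fixed finite-state irreducible aperiodic chain. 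This yields the exact $O(1/\eta) = O(\epsilon)$ gap. Your argument as written proves the slightly weaker $\text{Welfare}_{\tsf{av}}^{\tsf{WMA}} \geq \text{Welfare}^*_{\tsf{av}} - O(\epsilon\log(1/\epsilon))$; replace the fixed window by renewal frames to recover the stated bound.
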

\begin{proof} (Theorem \ref{theorem:wma-per-markov})
First we see that the queueing bound (\ref{eq:q-bdd-markov}) can be proven using the exact same argument as in the proof of Theorem \ref{theorem:wma-per}. The performance bound (\ref{eq:per-bdd-markov}) can be proven using a multiple-slot drift argument developed in \cite{huangneely_qlamarkovian}. The details are omitted for brevity. 
\end{proof}

\section{Simulation}\label{section:sim}
%In this section, we present numerical results for our algorithms. 
In this section, we present simulation results for the \tsf{WMA} algorithm. 
In the simulation, we use $T=24$ and consider there are $2$ users. Note that here each user can be viewed as representing a class of consumers. 
% 
%Below we will specify the corresponding parameter values. 
We express the user load in unit of $100$MW. \footnote{The number of users and the following per-user load parameters are chosen to match the annual average hourly power demand of PG$\&$E for the whole year $2010$, which is on the order of $4000$ MW. } 
We assume that $L_n^{\tsf{min}}(t)=3$ if $t\notin[9, 18]$, and $L_n^{\tsf{min}}(t)=5$ if $t\in[9, 18]$ for all users, and that $L_n^{\tsf{max}}=12$. %
We assume that both users have the following utility function, which have units of  $1000$  dollars per $100$ MW:  
%\begin{eqnarray}
%U_n(L, t)  = 400 U^{\tsf{b}}(L, t), \label{eq:utility-user}
%\end{eqnarray}
%where $U^{\tsf{b}}_n(L, t)$ is given by: 
\begin{eqnarray}
\hspace{-.5in}&&U_n(L, t)  = \left\{\begin{array}{rl}
%\min([L-L_1^{\tsf{min}}(t)]^+, L_{\tsf{nb}}), &0\leq t\leq9,\\
8L &  t\in[9, 18], \, L \leq 5\\
0.8L+36 & t\in[9, 18], \, L [5, 6]\\ % 50L
4L + 16.8, & t\in[9,18], \, L\in[6, 12], \\ % 1000L-5700
a\min( L,  6) , & t\notin[9, 18], \, L\leq12. \label{eq:utility-base} % 500L
\end{array}\right. 
\end{eqnarray}
Here $a$ is tunable parameter which allows us to study how the off-peak utility affects the social welfare. 
Since both users have the same utility function, we have $\gamma=1$. 
We note that the utility functions for time slots in $[9, 18]$ are non-concave. 
%Although the base utility functions are identical for both users, 
We assume that  the users have different QoU requirements, i.e., $L_1^{\tsf{av}} = 4.5$ and $L_2^{\tsf{av}}=8$. Thus, user $1$ represents consumers that are more flexible in power consumption, and are more willing to shift their load. 
%$L_n^{\tsf{av}}$ is chosen independently and uniformly at random from $[2, 8]$(unit: MW). 
We also assume for simplicity that $w_n(t)=0$ for all $t$. Thus, each user consumes exactly the intended load. 
%We assume that $w_1(t)$ is a Bernoulli random variable taking values $\pm0.1L$ with equal probability; whereas $w_2(t)$ is a Bernoulli random variable taking values $\pm0.2L$ with equal probability. 
% 
The  feasible price set is chosen to be $\script{P} = [0, 800]$, which corresponds to $0\cent$ to  $8\cent$ per kWh, and is consistent with the current electricity market prices. 
%\footnote{Note that this is a per $100$MW price. Thus it is consistent with the current electricity market prices. } 

We also use the following real-world data for market prices and renewable energy. 

\textbf{Hourly market prices:} We first compute $12$ hourly price data sets, for day-ahead and real-time market prices by averaging the data of PG$\&$E in the  year $2010$ (One for each month. Data are from the CAISO daily report on the Federal Energy Regulatory Commission (FERC) website \cite{caiso-web}). Then, on each day, we independently choose one price vector pair from the $12$ data set uniformly at random. We convert the prices to thousand dollars per $100$MW. 
%We also change scale the power price by $0.1$ to convert them into using one thousand dollars as a unit. 
%Since the unit considered is $100$MW, we also scale the prices by a factor of $15$ to make the cost consistent with the utility model we use. 
%The results are plotted in Fig. \ref{fig:price-wind}. 
 
\textbf{Hourly renewable energy:} We then compute the hourly renewable available energy  using the recorded wind power of $100$ wind turbines located near $40.42N, 124.39W$ at  the west coast in California in year $2006$, which has an aggregate capacity of $300$MW (data from the National Renewable Energy Laboratory (NREL) website \cite{wind-web}). 
For each hour, we first average the values of the recorded wind power of that hour (sampled every $10$ minutes) to obtain $365$ samples (normalized to with unit  $100$MW). We then use the samples as the state space and use the empirical distribution as its true distribution. 

%The annual average hourly market prices and available wind power are plotted in Fig. \ref{fig:price-wind}. 
\begin{figure}[cht]
\vspace{-.1in}
\centering
\includegraphics[height=2.5in, width=3.5in] {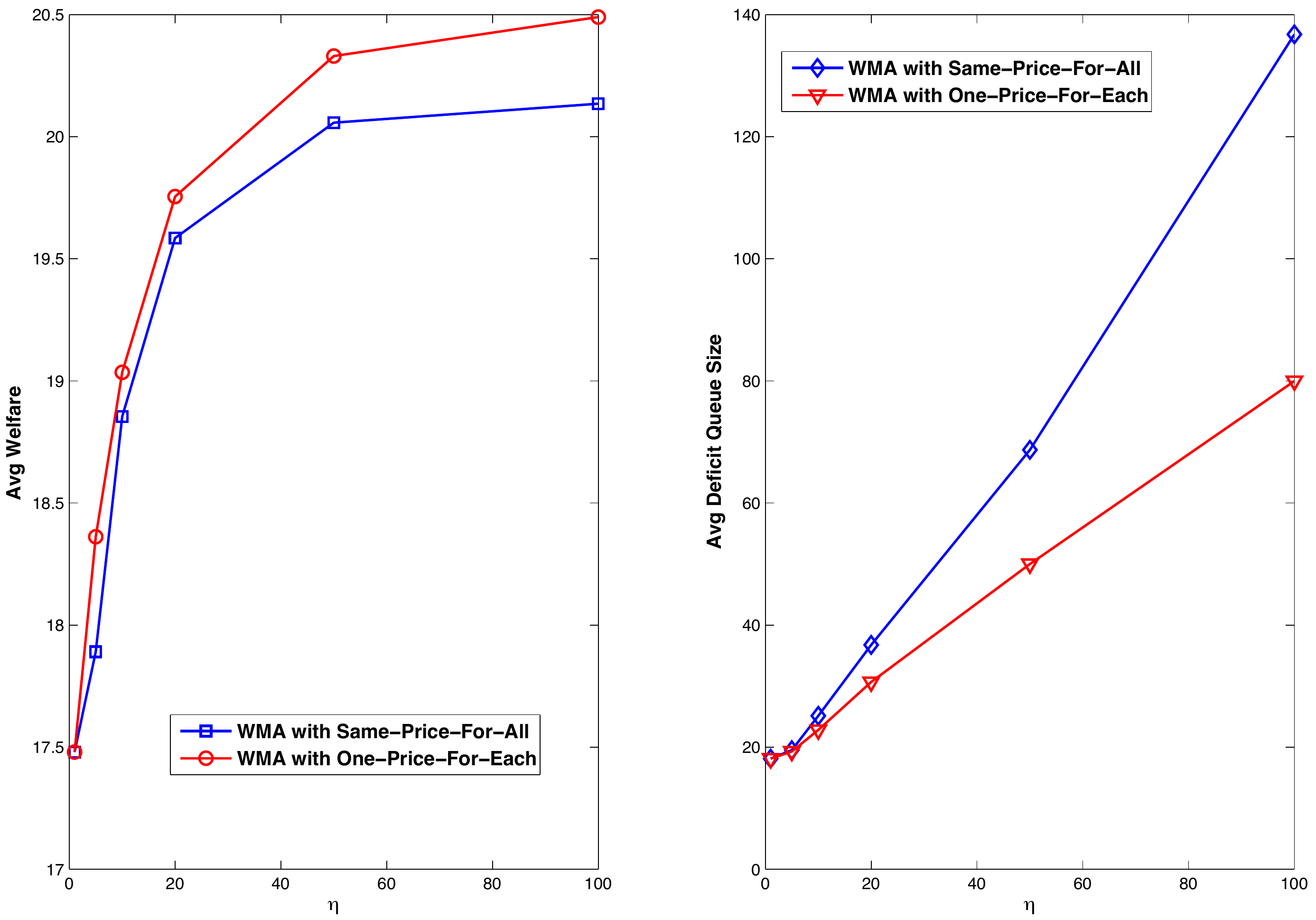}
\vspace{-.2in}
\caption{\small{Average achieved welfare and average deficit queue size. The off-peak utility coefficient $a=3$}. }
\label{fig:welfare1}
\vspace{-.1in}
\end{figure}

We first simulate our algorithm with the off-peak utility coefficient $a=3$. 
The simulation results are plotted in Fig. \ref{fig:welfare1} and \ref{fig:welfare2}. 
We simulated \tsf{WMA} with both two pricing schemes: same-price-for-all and one-price-for-each (Note that in this case \tsf{WMA} will also converge to the corresponding optimal  when different prices are allowed). 
%Both the sinlge-price-for-all case and the different-price case are simulated. 
From the results, we see that \tsf{WMA} efficiently improves the social welfare while effectively controlling the QoU. 
% first see that the average welfare converges quickly to near optimal under \tsf{WMA} as $\eta$ increases. We also see that the average queue size is only growing slowly with $\eta$. 
For instance, in Fig. \ref{fig:welfare1}, when $\eta=20$, the social welfare (under the same-price-for-all scheme) already reaches $19.6$ thousand dollars per hour, which is near-optimal. Also, the corresponding deficit is roughly  $37$, which  roughly corresponds to a $3$ hour delay of the load, because the average hourly load is $5+8=13$. 
%The average queue size is increasing linear with $\eta$ and can be verified to be much smaller than our queueing bound. For instance, when $\eta=20$, the average deficit is below $40$, 
We note that our deficit queueing bound (\ref{eq:queue-bound}) for $\eta=20$ is $756$. This suggests that the actual average deficit queue size of \tsf{WMA} can be much smaller than that in (\ref{eq:queue-bound}). 
%This suggests that our bounds are very conservative. %, and the \tsf{WMA} algorithm is very \emph{delay-efficient}. 

From the simulation results, we can also see  how the pricing scheme affects the social welfare. %, we also simulated \tsf{WMA} with different prices for different users 
We see that  this price diversity not only allows us to improve the social welfare by $2\%$ (Fig. \ref{fig:welfare1}) to $9\%$ (Fig. \ref{fig:welfare2}), but also results in a smaller deficit (up to $41\%$ saving in both cases). This is because allowing different prices enables the system to better adapt to the deficits, which ensures the QoU being met more quickly. 

\begin{figure}[cht]
\centering
\vspace{-.1in}
\includegraphics[height=2.5in, width=3.5in] {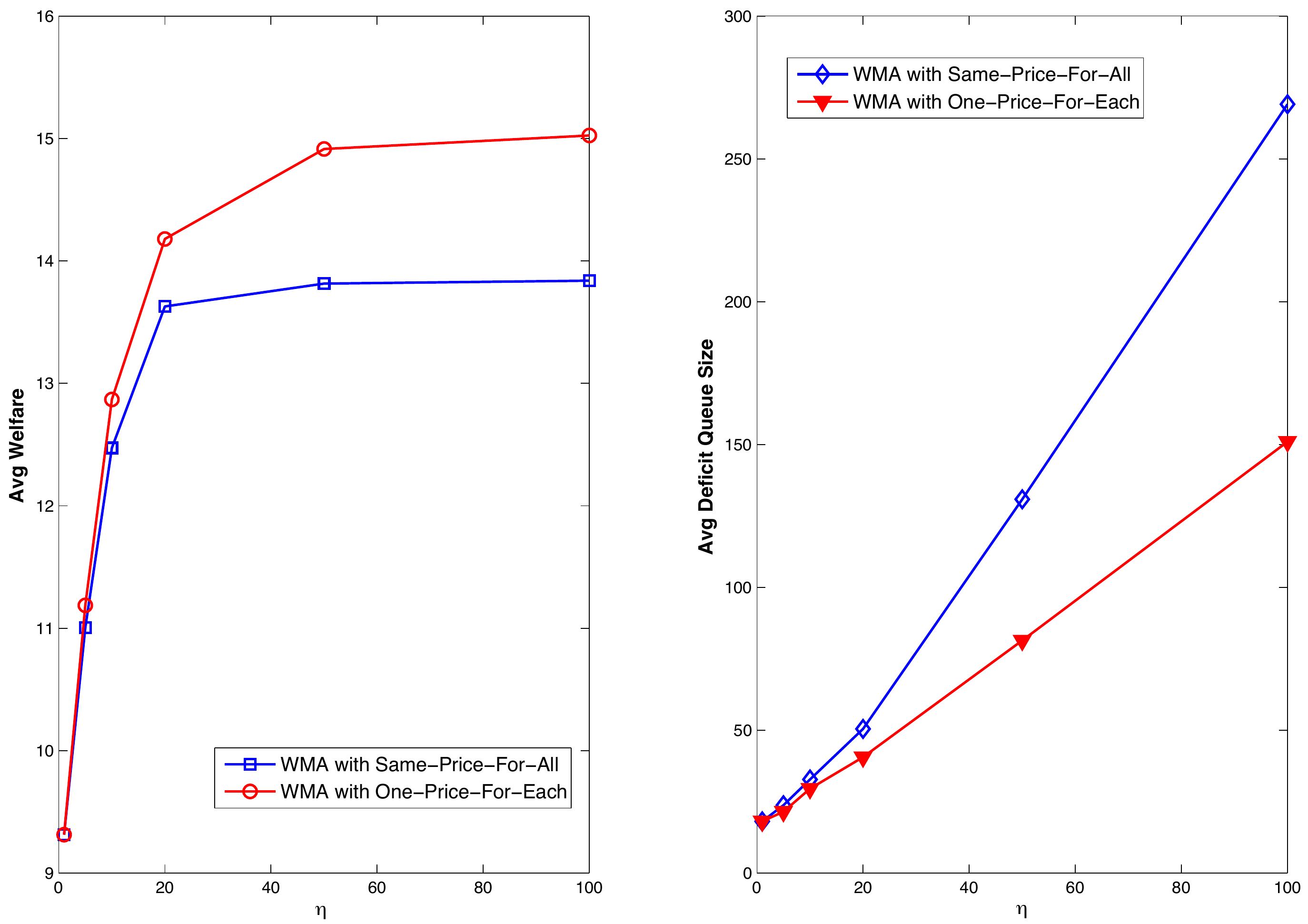}
\vspace{-.2in}
\caption{\small{Average achieved welfare and average deficit queue size. The off-peak utility coefficient $a=2$}. }
\label{fig:welfare2}
\vspace{-.2in}
\end{figure}

%\color{red} *** The figures are too small.  It might be necessary to keep the proofs in an online technical report and skip the appendices in the conference paper.**** \color{black}

%\begin{figure}[cht]
%\centering
%\includegraphics[height=2.2in, width=3.5in] {price-wind}
%\caption{\small{The annual average hourly day-ahead and real-time prices, and the annual average hourly available wind power.}}
%\label{fig:price-wind}
%\end{figure}

%Since the base load ($\geq200$MW) is always greater than the maximum available wind power, the optimal day-ahead power will be procured according to (\ref{eq:opt-B-def}) with 
%We assume that both users have the same utility functions, and the function for time $t$ is given by: $U(L, t) = a_t\log(1+b_tL)$, where $a_t$ and $b_t$ are generated by === 

\section{Conclusion}\label{section:conclusion}
In this paper, we consider the problem of jointly optimizing power procurement and  demand response to maximize social welfare. We develop  a low-complexity algorithm called the \emph{welfare maximization algorithm} (\tsf{WMA}), which  is constructed based on a two-timescale Lyapunov optimization technique. We prove that \tsf{WMA} achieves a close-to-optimal utility and ensures that the users' average power usages  meet their requirements with bounded deficit. We also perform simulation using real-world data and show that \tsf{WMA} effectively achieves near-optimal social welfare. 

\section*{Appendix A -- Proof of Lemma \ref{lemma:value-renewable}}
Here we prove Lemma \ref{lemma:value-renewable}. %We first recall 
We first recall the $\text{Cost}(k, t)$ function: 
\begin{eqnarray}
  \text{Cost}(k, t) \triangleq  \beta(k, t) B(k, t) + \alpha(k, t) Y(k, t),  \nonumber 
\end{eqnarray}
where the power deficit $Y(t)$ is given by: 
%We also recall that:  
\begin{eqnarray}
Y(k, t) &=& \big[L(k, t) - X(k, t) - B(k, t)\big]^+ \nonumber\\
&=&  \big[L^{\tsf{d}}(k, t) - Z(k, t)- B(k, t)\big]^+. \label{eq:YasZ}
\end{eqnarray}
% 
%$Y(k, t) = \big[L(k, t) - X(k, t) - B(k, t)\big]^+ = $
We now present the proof. For notation simplicity, we  omit the $k, t$ indexes when it is clear. 
\begin{proof} (Lemma  \ref{lemma:value-renewable}) 
First, it is clear that if $\bar{\alpha}(k, t)=\mathbb{E}[\alpha(k, t)]<\beta(k, t)$, then $B^*(k, t)=0$. Now suppose $\bar{\alpha}(k, t)\geq\beta(k, t)$. 
% 
%Define the function inside the expectation as $G(k, t)$, i.e., 
%\begin{eqnarray}
%\hspace{-.3in}&&G(k, t) \triangleq  \expect{\beta(k, t) B(k, t) + \alpha(k, t) Y(k, t)}.  \nonumber 
%\end{eqnarray}
%Here recall that $Y(k, t) = \big[L(k, t) - X(k, t) - B(k, t)\big]^+$. % 
Using (\ref{eq:YasZ}) and omitting the $k, t$ indexes, we have: 
\begin{eqnarray}
\hspace{-.1in}\expectm{\text{Cost}(k, t)} = \bar{\alpha}\int_{-\infty}^{\infty} \big[L^{\tsf{d}} - B - z\big]^+f_Z(z, t)dz + \beta B\nonumber\\
\hspace{-.1in}=   \bar{\alpha}\int_{-\infty}^{L^{\tsf{d}} - B} \big[L^{\tsf{d}} - B - z\big]f_Z(z, t)dz + \beta B. \label{eq:opt-G}
\end{eqnarray}
It can be verified that $\expectm{\text{Cost}(k, t)}$ is convex in $B$. Thus, to minimize it, we take the derivate to get:
\begin{eqnarray}
\hspace{-.3in}&&\frac{d\expectm{\text{Cost}}}{dB} = \beta + \bar{\alpha}\big[- (L^{\tsf{d}}-B)f_Z(L^{\tsf{d}}-B, t)\big] \nonumber\\
\hspace{-.3in}&&\qquad\quad - \bar{\alpha} \bigg[ -(L^{\tsf{d}}-B)f_Z(L^{\tsf{d}}-B, t) - \int_{-\infty}^{L^{\tsf{d}}-B}f_Z(z, t)dz \bigg] \nonumber\\
\hspace{-.3in} &&\qquad = \beta - \bar{\alpha} F_Z(L^{\tsf{d}}-B, t). \label{eq:opt-cond}
\end{eqnarray}
Setting (\ref{eq:opt-cond}) to zero, we obtain:
\begin{eqnarray}
B^* = \bigg[L^{\tsf{d}}-F_Z^{-1}(\frac{\beta}{\bar{\alpha}}, t)\bigg]^+. \label{eq:opt-B}
\end{eqnarray}
This proves (\ref{eq:opt-B-def}). To see (\ref{eq:value-renewable}), suppose $B^*>0$ and plug (\ref{eq:opt-B}) back into (\ref{eq:opt-G}), we get:
\begin{eqnarray}
\hspace{-.4in}&&  \expectm{\text{Cost}(k, t)} = \beta B+\bar{\alpha} (L^{\tsf{d}} - B)\int_{-\infty}^{F_Z^{-1}(\frac{\beta}{\bar{\alpha}}, t)} f_Z(z, t)dz  \\
\hspace{-.4in}&& \qquad\qquad\qquad\qquad\qquad- \bar{\alpha} \int_{-\infty}^{F_Z^{-1}(\frac{\beta}{\bar{\alpha}}, t)} zf_Z(z, t)dz  \nonumber\\
\hspace{-.4in}&& \qquad\qquad\quad\,\,\,\, = \beta L^{\tsf{d}} - \bar{\alpha} \int_{-\infty}^{F_Z^{-1}(\frac{\beta}{\bar{\alpha}}, t)} zf_Z(z, t)dz. 
%\alpha \int_{-\infty}^{F_Z^{-1}(\frac{\beta}{\alpha}, t)} Z f_Z(z, t)dz - \alpha\int_{B^*}^{\infty} B^* f_Z(z, t)dz + \beta B^* \nonumber \\
%\hspace{-.4in} && \qquad\quad\, = \alpha\int_{B^*}^{\infty} Z f_Z(z, t)dz. 
\end{eqnarray}
Here the second equality uses the fact that $\int_{-\infty}^{F_Z^{-1}(\frac{\beta}{\bar{\alpha}}, t)} f_Z(z, t)dz=\beta/\bar{\alpha}$. 
Finally, it can be seen from (\ref{eq:cost-zero-B}) and  (\ref{eq:value-renewable}) that $\expectm{\text{Cost}(k, t)}$ is non-decreasing in $L^{\tsf{d}}_n(k, t)$. This completes the proof.  
\end{proof}

\section*{Appendix  B -- Proof of Lemma \ref{lemma:value-of-renewable}}
Here we prove Lemma \ref{lemma:value-of-renewable}. 

\begin{proof} (Lemma \ref{lemma:value-of-renewable})
Since $w(t)$ is independent of $X(t)$ and has zero mean, it suffices to show that $\text{VoR}(t)$ is increasing with the mean of $Z(t)$ and non-increasing with the variance of $Z(t)$. 
To begin, we note that $Z(t)$ can be written as a sum of a  constant, which determines its mean, and a zero mean random variable, i.e., (we drop the time indexes below for convenience): 
\begin{eqnarray}
Z=\mu+ \sigma H. 
\end{eqnarray}
Here $H$ has unit variance, zero mean,  and a pdf $f_H(h)$. $\sigma^2$ is the variance of $Z$. 

We first consider the case when $B^*\geq0$.  Recall (\ref{eq:value-renewable}): 
\begin{eqnarray}
\text{VoR}(t) = \bar{\alpha} \int_{-\infty}^{F_Z^{-1}(\frac{\beta}{\bar{\alpha}})} zf_Z(z)dz. %\label{eq:value-of-renewable}
\end{eqnarray}
Denote $\theta=F_Z^{-1}(\frac{\beta}{\bar{\alpha}})$, we can rewrite the above as: 
\begin{eqnarray*}
\text{VoR}(t) &=&\bar{\alpha}\int_{-\infty}^{\theta} zf_Z(z)dz\\
 &=& \bar{\alpha} \int_{-\infty}^{\frac{\theta-\mu}{\sigma}}(\mu+\sigma h)f_H(h)dh\\
&=& \bar{\alpha} \int_{-\infty}^{\frac{\theta-\mu}{\sigma}}\mu f_H(h)dh + \sigma\bar{\alpha} \int_{-\infty}^{\frac{\theta-\mu}{k}}hf_H(h)dh\\
&=& \bar{\alpha} \mu F_H(\frac{\theta-\mu}{\sigma})+\sigma\bar{\alpha} \int_{-\infty}^{\frac{\theta-\mu}{\sigma}}hf_H(h)dh.
\end{eqnarray*}
Now since $F_Z(\theta)=\frac{\beta}{\bar{\alpha}}$, we see that $F_H(\frac{\theta-\mu}{\sigma})=\frac{\beta}{\bar{\alpha}}$. Thus $\frac{\theta-\mu}{\sigma}=F_H^{-1}(\frac{\beta}{\bar{\alpha}})$. 
Denote $\theta_H=F_H^{-1}(\frac{\beta}{\bar{\alpha}})$, then we have:
\begin{eqnarray}
\text{VoR}(t) =  \beta \mu +\sigma\bar{\alpha} \int_{-\infty}^{\theta_H}hf_H(h)dh.\label{eq:gain-mean-variance}
\end{eqnarray}
Now since $\expectm{H}=0$, we see that $\int_{-\infty}^{\theta_H}hf_H(h)dh\leq0$. Hence $\text{VoR}(t)$ is an increasing function of $\mu$ and a non-increasing function of $\sigma$. 
%Thus, %This is because,  if $\Delta_Z\leq0$, then $\int_{-\infty}^{\Delta_Z}zf_Z(z)dz\leq0$. Otherwise if $\Delta_Z>0$, then $ \int_{\Delta_Z}^{\infty}zf_Z(z)dz\geq0$. Thus, 
%\begin{eqnarray}
 %\int_{-\infty}^{\Delta_Z}zf_Z(z)dz = \expect{Z} - \int_{\Delta_Z}^{\infty}zf_Z(z)dz\leq0. 
%\end{eqnarray}

Now suppose $B^*=0$. In this case, we will show that $\expectm{\text{Cost}(k, t)}$ is decreasing in $\mu$ and increasing in $\sigma$. To this end, we have:
\begin{eqnarray*}
\hspace{-.3in}&&\expectm{\text{Cost}}= \bar{\alpha}  \int_{-\infty}^{L^{\tsf{d}} } (L^{\tsf{d}} -z) f_Z(z)dz  \\
\hspace{-.3in}&&\qquad\quad\,\,\,=  \bar{\alpha}   \int_{-\infty}^{\frac{L^{\tsf{d}} -\mu}{\sigma}} (L^{\tsf{d}} -\mu-\sigma h)f_H(h)dh \\
\hspace{-.3in}&&\qquad\quad\,\,\,= \bar{\alpha} (L^{\tsf{d}} -\mu)  \int_{-\infty}^{\frac{L^{\tsf{d}} -\mu}{\sigma}} f_H(h)dh - \bar{\alpha}  \sigma \int_{-\infty}^{\frac{L^{\tsf{d}} -\mu}{\sigma}} hf_H(h)dh. 
\end{eqnarray*}
We can now take derivative with respect to $\mu$ to get:
\begin{eqnarray}
\frac{\partial \expectm{\text{Cost}}}{d\mu} &=& -\bar{\alpha} F_H(\frac{L^{\tsf{d}}-\mu}{\sigma})  \label{eq:derivative-mu} \\
&\leq& 0. \nonumber 
\end{eqnarray}
This implies that $\expectm{\text{Cost}}$ is decreasing with $\mu$. We can also take a derivative with respect to $\sigma$ to get: 
\begin{eqnarray}
\frac{\partial \expectm{\text{Cost}}}{d\sigma} 
&=& -\bar{\alpha} \int_{-\infty}^{\frac{L^{\tsf{d}}-\mu}{\sigma}}hf_H(h)dh\label{eq:derivative-k}\\ 
&\geq& 0. \nonumber 
\end{eqnarray}
In the last step, we have used the fact that $\expectm{H}=0$, thus $ \int_{-\infty}^{\frac{L^{\tsf{d}}-\mu}{\sigma}}hf_H(h)dh\leq0$. 
We see that the function is increasing with $\sigma$. 
\end{proof}

\section*{Appendix C -- Proof of Lemma \ref{lemma:pure-drift}}
In this section, we prove Lemma \ref{lemma:pure-drift}. 
\begin{proof} (Lemma \ref{lemma:pure-drift}) Squaring both sides of the queueing dynamic equation (\ref{eq:queue-dynamic}), we have:
 \begin{eqnarray*}
[Q_n(t_k+t+1)]^2 &\leq& [Q_n(t_k+t)]^2 + [L_n(k, t)]^2 + [L_n^{\tsf{av}}]^2\\
&& - 2Q_n(k, t) \big[ L_n(k, t) - L_n^{\tsf{av}}  \big]. 
\end{eqnarray*}
Multiplying both sides with $\frac{1}{2}$, using the fact that $L_n(k, t)\leq L^{\tsf{max}}_n$,  and summing over all $n$, we have: 
 \begin{eqnarray*}
V(t_k+t+1) &\leq& V(t_k+t) + \frac{1}{2}\sum_{n}\bigg([L_n^{\tsf{max}}]^2 + [L_n^{\tsf{av}}]^2\bigg)\\
&& - \sum_nQ_n(t_k+t) \big[ L_n(k, t) - L_n^{\tsf{av}}  \big]. 
\end{eqnarray*}
Denote $C=\frac{1}{2}\sum_{n}\big([L_n^{\tsf{max}}]^2 + [L_n^{\tsf{av}}]^2\big)$. 
Summing over $t=0, ..., T-1$ and taking expectations on both sides conditioning on $\bv{Q}(t_k)$, we get the following:
 \begin{eqnarray}
&&\Delta(t_k) \leq CT \label{eq:drift-foo}\\
&&\qquad- \sum_{t}\expect{\sum_nQ_n(t_k+t) \big[ L_n(k, t) - L_n^{\tsf{av}}  \big] \left.|\right. \bv{Q}(t_k)}. \nonumber
\end{eqnarray}
This proves the lemma. 
\end{proof}

\section*{Appendix D -- Proof of (\ref{eq:welfare-bound}) of Theorem \ref{theorem:wma-per}}
In this section, we prove Theorem \ref{theorem:wma-per}. To proceed, we first have the following lemma, which shows that \tsf{WMA} approximately minimizes  (\ref{eq:wma-max}).  
\begin{lemma}\label{lemma:approx-dp}
Let $\Phi^*$ be the maximum value of $\Phi(\bv{p}(k))$ and let $\bv{p}^{\tsf{A}}(k)=(p^{\tsf{A}}(k, t), t=0, ..., T-1)$ be the maximizers of $\Phi^{\tsf{A}}(\bv{p}(k))\triangleq\sum_{\tau=0}^{T-1}\Phi^{\tsf{A}}(p(k, t))$. Then $\Phi^{\tsf{A}}(\bv{p}^{\tsf{A}}(k))\geq \Phi^*- TC_0$, where $C_0=(T-1)\sum_n\big( [L_n^{\tsf{max}}]^2 +  [L_n^{\tsf{av}}]^2\big)/2$. 
\end{lemma}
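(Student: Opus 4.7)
The plan is to exploit the slot-separability of $\Phi^{\tsf{A}}(\bv{p}(k))=\sum_{t=0}^{T-1}\Phi^{\tsf{A}}(p(k,t))$, together with the fact that the deficit queues cannot drift by much within a single $T$-slot frame, so that replacing $Q_n(t_k+t)$ by $Q_n(t_k)$ in the queue-weighted load term costs only an additive constant that is uniform over $\bv{p}(k)\in\script{P}$.

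First I would establish the deterministic queue-variation bound $|Q_n(t_k+t)-Q_n(t_k)|\leq t\cdot\max(L_n^{\tsf{max}},L_n^{\tsf{av}})$ for $0\leq t\leq T-1$. This follows by telescoping the one-step inequality $|Q_n(\tau+1)-Q_n(\tau)|\leq\max(L_n^{\tsf{max}},L_n^{\tsf{av}})$, which is immediate from (\ref{eq:queue-dynamic}): a queue either grows by $L_n^{\tsf{av}}$ (and hence changes by at most $L_n^{\tsf{av}}$) or decreases by $L_n(k,t)-L_n^{\tsf{av}}$ whose magnitude is at most $L_n^{\tsf{max}}$.

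Next, for any $\bv{p}(k)\in\script{P}$ the two objectives $\Phi(\bv{p}(k))$ and $\Phi^{\tsf{A}}(\bv{p}(k))$ agree term by term except for the queue coefficient multiplying $L_n^{\tsf{d}}(p(k,t),t)$. Bounding $L_n^{\tsf{d}}\leq L_n^{\tsf{max}}$ and plugging in the previous step, after collapsing $\sum_{t=0}^{T-1}t=T(T-1)/2$, yields
\begin{eqnarray*}
\big|\Phi(\bv{p}(k))-\Phi^{\tsf{A}}(\bv{p}(k))\big| &\leq& \frac{T(T-1)}{2}\sum_n \max(L_n^{\tsf{max}},L_n^{\tsf{av}})\cdot L_n^{\tsf{max}}.
\end{eqnarray*}
A short case analysis plus AM-GM gives $\max(L_n^{\tsf{max}},L_n^{\tsf{av}})\cdot L_n^{\tsf{max}}\leq [L_n^{\tsf{max}}]^2+[L_n^{\tsf{av}}]^2$ (if $L_n^{\tsf{max}}\geq L_n^{\tsf{av}}$ the left side equals $[L_n^{\tsf{max}}]^2$; otherwise $L_n^{\tsf{av}} L_n^{\tsf{max}}\leq\frac{1}{2}([L_n^{\tsf{max}}]^2+[L_n^{\tsf{av}}]^2)$), so the right-hand side above is exactly $TC_0$ with $C_0=(T-1)\sum_n([L_n^{\tsf{max}}]^2+[L_n^{\tsf{av}}]^2)/2$ as claimed. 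This gives a uniform approximation error bound $|\Phi(\bv{p}(k))-\Phi^{\tsf{A}}(\bv{p}(k))|\leq TC_0$ that holds for every feasible $\bv{p}(k)$.

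Finally, let $\bv{p}^*$ be a maximizer of $\Phi$, so $\Phi(\bv{p}^*)=\Phi^*$. Since $\bv{p}^{\tsf{A}}(k)$ maximizes the separable sum $\sum_{t}\Phi^{\tsf{A}}(p(k,t))$ slot-by-slot, it dominates any fixed competitor, and applying the uniform bound twice gives
\begin{eqnarray*}
\Phi^{\tsf{A}}(\bv{p}^{\tsf{A}}(k)) &\geq& \Phi^{\tsf{A}}(\bv{p}^*) \;\geq\; \Phi(\bv{p}^*) - TC_0 \;=\; \Phi^* - TC_0,
\end{eqnarray*}
which is the desired bound. I do not anticipate any conceptual obstacle; the only real nuisance is matching the advertised constant in $C_0$, which boils down to the case-analysis inequality noted above plus bookkeeping on the arithmetic-series factor. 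The slot-separability of $\Phi^{\tsf{A}}$, which is what makes \tsf{WMA} computationally tractable in the first place, together with the per-slot boundedness of $\Delta Q_n$, are the two ingredients that drive the whole argument.
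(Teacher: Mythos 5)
Your proposal is correct and follows essentially the same route as the paper: bound the within-frame queue variation from the dynamics (\ref{eq:queue-dynamic}), convert it into a uniform additive gap of $TC_0$ between $\Phi$ and $\Phi^{\tsf{A}}$, and conclude by evaluating at a maximizer of $\Phi$ and using the optimality of $\bv{p}^{\tsf{A}}(k)$ for $\Phi^{\tsf{A}}$. The only cosmetic difference is that you prove a two-sided bound $|\Phi-\Phi^{\tsf{A}}|\leq TC_0$ via $|Q_n(t_k+t)-Q_n(t_k)|\leq t\max(L_n^{\tsf{max}},L_n^{\tsf{av}})$, whereas the paper uses the two one-sided queue bounds to get only the one-sided inequality it needs; both yield the same constant.
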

\begin{proof}  (Lemma \ref{lemma:approx-dp}) 
From (\ref{eq:queue-dynamic}), we have the following inequalities:
\begin{eqnarray*}
Q_n(t_k+t) \leq Q_n(t_k) + tL_{n}^{\tsf{av}}, \,\,\,
Q_n(t_k+t) \geq Q_n(t_k) - tL_{n}^{\tsf{max}}. 
\end{eqnarray*}
Using these inequalities, we have: %which then imply that:
\begin{eqnarray}
&&Q_n(t_k+t) \big[ L_n(k, t) - L_n^{\tsf{av}}  \big]  \label{eq:drift-queue-foo1}\\
&&\qquad \leq Q_n(t_k)  \big[ L_n(k, t) - L_n^{\tsf{av}}  \big] + t\big( [L_n^{\tsf{max}}]^2 +  [L_n^{\tsf{av}}]^2\big). \nonumber
\end{eqnarray}
Then, by comparing $\Phi(\bv{p}(k))$ and $\Phi^{\tsf{A}}(\bv{p}(k))$, we see that for any $\bv{p}(k)$, 
\begin{eqnarray}
\Phi(\bv{p}(k)) &\leq& \Phi^{\tsf{A}}(\bv{p}(k)) + \sum_{t=0}^{T-1}t\sum_n\big( [L_n^{\tsf{max}}]^2 +  [L_n^{\tsf{av}}]^2\big)\nonumber\\
&=&  \Phi^{\tsf{A}}(\bv{p}(k)) +\frac{T(T-1)}{2}\sum_n\big( [L_n^{\tsf{max}}]^2 +  [L_n^{\tsf{av}}]^2\big) \nonumber\\
&\leq& \Phi^{\tsf{A}}(\bv{p}^{\tsf{A}}(k)) +\frac{T(T-1)}{2}\sum_n\big( [L_n^{\tsf{max}}]^2 +  [L_n^{\tsf{av}}]^2\big). \nonumber
\end{eqnarray}
Since this holds for all $\bv{p}(k)$, it holds for the $\bv{p}(k)$ that maximizes $\Phi(\bv{p}(k))$. Thus, by defining $C_0\triangleq \frac{(T-1)}{2}\sum_n\big( [L_n^{\tsf{max}}]^2 +  [L_n^{\tsf{av}}]^2\big)$, we see that Lemma \ref{lemma:approx-dp} follows. 
\end{proof} 

We are now ready to prove Theorem \ref{theorem:wma-per}. 
\begin{proof} (Theorem \ref{theorem:wma-per})
From  Lemma \ref{lemma:approx-dp} we see that \tsf{WMA} minimizes the RHS of (\ref{eq:drift-utility-0}) to within $TC_0$ of the minimum. %Thus, 
%We first recall that \tsf{WMA} minimizes the RHS of the drift inequality (\ref{eq:drift-utility-0}) to within . 
Thus we have the following inequality: 
\begin{eqnarray}
\hspace{-.25in}&& \Delta(t_k) -\eta\expect{\text{Welfare}^{\tsf{WMA}}(k) \left.|\right. \bv{Q}(t_k)}\label{eq:drift-utility-0-recap} \\
\hspace{-.25in}&&\,\,\, \leq C_1T -\eta\sum_{t=0}^{T-1}\expect{\sum_{n=1}^N U^{\tsf{A}}_{n}(L_n(k, t)) - \text{Cost}^{\tsf{A}}(k, t)\left.|\right. \bv{Q}(t_k)} \nonumber \\
\hspace{-.25in}&&\qquad\quad\,\,\,- \sum_{t=0}^{T-1}\expect{\sum_{n}Q_{n}(t_k+t)\big[  L^{\tsf{A}}_n(k, t) - L^{\tsf{av}}_n\big] \left.|\right. \bv{Q}(t_k)}. \nonumber
\end{eqnarray}
Here $C_1=C+C_0=\frac{T}{2}\big( [L_n^{\tsf{max}}]^2 +  [L_n^{\tsf{av}}]^2\big)$, and the superscript  \tsf{A} stands for any other alternative pricing policies, including the optimal randomized and stationary policy $\Pi^*$ defined in Theorem \ref{theorem:optimal-policy}. Now plugging the policy $\Pi^*$ into the RHS of (\ref{eq:drift-utility-0-recap}) and using the fact that:  
\begin{eqnarray}
\expect{\text{Welfare}^{\Pi^*}(k)} = \text{Welfare}^*_{\tsf{av}}, \,\,\,
\expect{L_n(k, t)} \geq L_n^{\tsf{av}}, \,\, \forall\, n, 
\end{eqnarray}
we immediately obtain:
\begin{eqnarray*}
\hspace{-.3in}&& \Delta(t_k) -\eta\expect{\text{Welfare}^{\tsf{WMA}}(k) \left.|\right. \bv{Q}(t_k)}  \leq C_1T -\eta\text{Welfare}^*_{\tsf{av}}. 
\end{eqnarray*}
Note that in the above step, we have use the fact that the policy $\Pi^*$ chooses actions purely as functions of the random system prices, and is independent of the deficit queue sizes. 
Now taking expectations on both sides over $\bv{Q}(t_k)$ and carrying out a telescoping sum over $k=0, 1, ..., K-1$, we get: 
\begin{eqnarray*}
\hspace{-.3in}&&\expect{L(t_{K}) - L(t_0)} -\eta\sum_{k=0}^{K-1}\expect{\text{Welfare}^{\tsf{WMA}}(k)}  \\
\hspace{-.3in}&&\leq KC_1T -K\eta\text{Welfare}^*_{\tsf{av}}. 
\end{eqnarray*}
Rearranging the terms, using the fact that $L(t)\geq0$ for all $t$,  and dividing both sides by $\eta K$, we get:
\begin{eqnarray*}
 \frac{1}{K}\sum_{k=0}^{K-1}\expect{\text{Welfare}^{\tsf{WMA}}(k)}  \geq \text{Welfare}^*_{\tsf{av}} -\frac{C_1T}{\eta} + \frac{\expectm{L(t_0)}}{K\eta}. 
\end{eqnarray*}
Taking the $\liminf$ as $K\rightarrow\infty$ and using the fact that $L(t_0)<\infty$, we see that (\ref{eq:welfare-bound}) follows. 
\end{proof}

$\vspace{-.2in}$
\bibliographystyle{unsrt}
\bibliography{../mybib}

\end{document}